\author{David Forsman\\
Université catholique de Louvain\\
\texttt{david.forsman@uclouvain.be}~}
\title{Semi-abelian by Design: Johnstone Algebras Unifying Implication and Division}
\date{April 28, 2025}
\theoremstyle{plain}
\newtheorem{theorem}{Theorem}[section]
\newtheorem{corollary}[theorem]{Corollary}
\theoremstyle{definition}
\newtheorem{definition}[theorem]{Definition}
\newtheorem{example}[theorem]{Example}
\newtheorem{remark}[theorem]{Remark}
\numberwithin{equation}{section}
\newcommand{\N}{\mathbb{N}}
\begin{document}
\maketitle

\begin{abstract}
Johnstone demonstrated that Heyting semilattices form a semi-abelian category via a specific triple of terms. Inspired by this work, we introduce \emph{Johnstone algebras} or J-algebras. The algebraic $(*,\to,e)$-theory $J$ of arities $(2,2,0)$ consists of three axioms carefully chosen to ensure protomodularity in alignment with Johnstone's terms. Johnstone algebras generalize well-known structures such as groups (division) and Heyting semilattices (implication) providing a unified framework within the well-behaved setting of semi-abelian categories.

We present two primary contributions. First, we identify the M-axiom,
\[
(t(x,y)\to x)\to (t(x,y)\to z) \approx x\to z, \text{ where }t(x,y) = (x\to y)\to y.
\]
The M-axiom is satisfied by residuated Johnstone algebras, and it can be considered a weakening of the H-axiom to comparable elements. We show that $t(x,y)$ defines a \emph{relative closure term} in MBC-algebras, and it implies that MBC-algebras form a variety of algebras, thereby generalizing the corresponding theorem related to HBCK-algebras. Second, we prove several no-go results, demonstrating that balanced theories or theories admitting non-discrete monotone or inflationary algebras cannot possess Malcev terms.

Together, these results establish Johnstone algebras as significant structures that achieve desirable categorical properties by carefully integrating both logical and symmetric features, while closely avoiding the constraints imposed by our no-go results.
\end{abstract}

\section*{Introduction}
Johnstone showed in \cite{Johnstone2004} that Heyting semilattices form a semi-abelian category \cite{Janelidze2001}, using a specific triple of $(\to,\land,\top)$-terms. These terms also demonstrate the semi-abelian nature of hoops \cite{Lapenta2024}, motivating a broader investigation into the core algebraic properties used. To escape the lattice-theoretic connotations, we consider the signature $(\to,*,e)$ of arities $(2,2,0)$.

The \emph{Johnstone protomodular terms} in this setting are given by:
\[
\theta(x,y,z) \coloneqq (y\to x) * z,\quad
\theta_1(x,y) \coloneqq x\to y,\quad
\theta_2(x,y) \coloneqq ((x\to y)\to y)\to x.
\]
We define the theory $J$ of \emph{Johnstone algebras} via axioms that ensure these terms satisfy the protomodularity equations. In the following, the equations on the left are for protomodularity and the equations on the right define the theory $J$ of Johnstone algebras:
\begin{multicols}{2}
\begin{enumerate}

    \item $\theta_1(x,x) \approx e$
    \item $\theta_2(x,x) \approx e$
    \item $\theta(y,\theta_1(x,y),\theta_2(x,y)) \approx x$
    \item[] $x\to x \approx e$\hfill (Reflexivity)
    \item[] $e\to x \approx x$\hfill (Unit-reduction)
    \item[] $\bar{x} *(\bar{x}\to x)\approx  x$\hfill (Monotone division)
\end{enumerate}
\end{multicols}
Here $\bar{x}$ is shorthand for the term $(x\to y)\to y$, which can be thought of as a closure of $x$ with respect to $y$, generalizing double negation. Notice that the two equations are the same on the first and the third row (Reflexivity, Monotone division). The two equations on the second row are different. This strengthening of $\theta_2(x,x) \approx e$ into $e\to x\approx x$ is only slight, since Unit-reduction in $J$ could be equivalently replaced by the two equations $\theta_2(x,x)\approx x$ and $x*e\approx x$. This equivalence can be seen by subtituting $x$ for $y$ in Monotone division.

A key feature of related logical algebras (such as BCK-algebras, and Heyting semilattices) is an order relation defined equationally via $x \leq y \coloneqq x \to y \approx e$. In $J$-algebras, this order is reflexive and anti-symmetric. The term $t(x,y) \coloneqq (x\to y)\to y$ plays an essential role in this framework. We investigate when an equation $x\leq y$ is \emph{strongly determined} by $t(x,y)$ in an algebraic theory $T$ in the following sense:
\begin{align*}
    T &\vdash x\leq t(x,y),\\
    T &\vdash x\leq y \Rightarrow t(x,y) \approx y.
\end{align*}
Strong determination allows the simplification of some implicative formulas to equations without changing the strength. For example, transitivity, $(x\leq y\land y\leq z)\Rightarrow x\leq z$, is equivalent to $x\leq t(t(x,y),z)$ in $T$, when the equation $x\leq y$ is strongly determined by the term $t(x,y)$. The quasi-equation of anti-symmetry requires more delicate handling. Specifically, we examine the equation $v(x,y) \approx t(v(x,y), y)$, where $v(x,y) \coloneqq t(t(x,y), x)$. This equation, which implies anti-symmetry, appears naturally in our derivations and is closely related to the Cornish condition J, $v(x,y)\approx v(y,x)$, known to characterize anti-symmetry equationally for HBCK-algebras~\cite{Bloki1993}.

This paper is structured as follows. \textbf{Section 1} reviews universal algebra preliminaries and formalizes the equational characterization of order structures. Theorem \ref{thm: Anti-symmetry equationalized} establishes that a theory admitting a \emph{weak relative closure term} $t(x,y)$, determining the equation $x\leq y$, allows one to equivalently replace the anti-symmetry $(x\leq y\leq x)\Rightarrow x\approx y$ with the equation $v(x,y)\approx t(v(x,y),y)$. This shows, with Theorem \ref{thm: MC-algebras have relative closure term} that the quasi-variety of MBC-algebras forms a variety, thereby generalizing Corollary 4.8 in~\cite{Bloki1993} and the main result in~\cite{Kowalski1994}. 

\textbf{Section 2} situates Johnstone algebras within categorical algebra, defining Malcev, protomodular, and semi-abelian theories. Here, we present several \emph{no-go theorems} that identify conditions under which semi-abelian properties are impossible without triviality: If a theory $T$ is balanced or admits a partially ordered, non-discrete algebra in which each structure map is monotonic (monotone algebra) or each structure map is inflationary in some component (inflationary algebra), then $T$ cannot have a Malcev term. We conclude by exploring compositional (B) and residual (R) Johnstone algebras $(M,*,\to,e)$, the axioms of which ensure that $\leq$ is a partial order in $M$, $(M,*,e)$ is a monoid and there is a Galois connection $x * (-) \dashv x \to (-)$ for each $x\in M$. The theory of RBCJ-algebras, or residual, compositional and commutative Johnstone algebras, generalizes the theories of commutative groups and hoops.

\section{Equational Order}
This section, after preliminaries of the standard universal algebra, focuses on equationally defined orders. We are particularly interested in when an equation $x \le y$ is \emph{strongly determined} by a term $t(x,y)$:
$$
T \vdash x \le y \implies t(x,y) \approx y\quad \text{ and }\quad T \vdash x \le t(x,y).
$$
Strong determination converts formulas $(x \le y \implies E(x,y))$ into less implicative form $E(x, t(x,y))$, simplifying properties like transitivity. While anti-symmetry often remains a quasi-equation, we prove it is equivalent to the equation $v(x,y) \approx t(v(x,y),y)$ (with $v(x,y)=t(t(x,y),x)$) if the term $t(x,y)$ is a \emph{weak relative closure term}, generalizing \cite{Kowalski1994}.
\begin{definition}[Signatures, Terms, and Equations]
Let $V$ be a fixed countably infinite set of variables $x_i,i\in \N$. A \emph{signature} $\sigma$ is a pair $(F, \mathrm{arity}\colon F\to\N)$, where $F$ is a set of function symbols and $\mathrm{arity}$ assigns to each symbol its arity. Elements $f\in F$ with $\mathrm{arity}(f)=0$ are called \emph{constant symbols}. 

The set $\mathrm{Term}$ of \emph{$\sigma$-terms} is defined as the smallest set satisfying:
\begin{enumerate}
    \item Every variable in $V$ is a term.
    \item Every constant symbol is a term.
    \item If $f\in F$ has non-zero arity $n$ and $t_1,\ldots,t_n\in\mathrm{Term}$, then $f(t_1,\ldots,t_n)$ is a term. If $n = 2$, we often denote $t_1ft_2$ for $f(t_1,t_2)$.
\end{enumerate}
A term $t$ may be denoted by $t(x_1,\ldots,x_n)$ to indicate that the set of variables occurring in $t$ is a subset of $\{x_1,\ldots,x_n\}\subset V$. Given a term $t(x_1,\ldots,x_n)$ and a function $s\colon V\to\mathrm{Term}$, we define the substitution
\[
t(s(x_1),\ldots,s(x_n))\coloneqq \bar{s}(t),
\]
where $\bar{s}\colon \mathrm{Term}\to\mathrm{Term}$ is the unique extension of $s$ that commutes with the operations of $\sigma$.

A \emph{$\sigma$-equation} is an expression of the form $t_1\approx t_2$ with $t_1,t_2\in\mathrm{Term}$. An equation is called \emph{balanced} if both sides of the equations have the same variables expressed. If $T$ is a set of equations, we call $T$ an \emph{algebraic $\sigma$-theory} and \emph{balanced} in the case all the equations of $T$ are balanced. We say that $T$ \emph{deduces} or \emph{proves} an equation $\phi$, written $T\vdash\phi$, if $\phi$ belongs to the smallest set $D_T$ of equations satisfying:
\begin{enumerate}
    \item Every equation in $T$ is in $D_T$. \hfill (Extension)
    \item For any $\sigma$-term $t$, the identity equation $t\approx t$ is in $D_T$. \hfill (Reflexivity)
    \item If $t\approx s\in D_T$, then $s\approx t\in D_T$.\hfill (Symmetry)
    \item If $p\approx q\in D_T$ and $q\approx r\in D_T$, then $p\approx r\in D_T$. \hfill (Transitivity)
    \item If $t_1(x_1,\ldots,x_n)\approx t_2(x_1,\ldots,x_n),s_i\approx s_i'\in D_T$, for $i\leq n$, then
    \begin{align*}
        t_1(s_1,\ldots,s_n)\approx t_2(s'_1,\ldots,s'_n)\in D_T.\tag{Substitution}
    \end{align*}
\end{enumerate}
\end{definition}

\begin{definition}[Algebras and Homomorphisms]
Let $\sigma=(F, \mathrm{arity})$ be a signature and $V$ a set of variables.
A \emph{$\sigma$-model} $M$ consists of a carrier set, also denoted $M$, and, for each $n$-ary function symbol $f \in F$, an $n$-ary operation $M(f)\colon M^n \to M$ also called \emph{structure map} if $n>0$ and a \emph{structure constant} if $n = 0$.

An \emph{assignment} into $M$ is a function $s\colon V \to M$. Any assignment extends uniquely to an evaluation map $\overline{s}\colon \mathrm{Term} \to M$ such that $\overline{s}(x) = s(x)$ for $x \in V$, and
\[
\overline{s}(f(t_1,\ldots,t_n)) = M(f)(\overline{s}(t_1),\ldots,\overline{s}(t_n))
\]
for $f \in F$ of arity $n$ and terms $t_1, \dots, t_n$. We denote the induced function as $M(t)\colon M^V\to M$, $s\mapsto \overline{s}(t)$.

A $\sigma$-model $M$ \emph{satisfies} a $\sigma$-equation $t_1 \approx t_2$, denoted $M \vDash t_1 \approx t_2$, if $M(t_1) = M(t_2)$.
If $M$ satisfies all equations in a theory $T$, we call $M$ a \emph{$T$-algebra}. If all $T$-algebras satisfy an equation $\phi$, we write $T \vDash \phi$ and say $\phi$ is a \emph{semantic consequence} of $T$.

Given $T$-algebras $M$ and $N$, a function $g\colon M \to N$ is a \emph{$T$-algebra homomorphism} if for every $k$-ary $f \in F$ and all $a_1, \ldots, a_k \in M$,
\[
g(M(f)(a_1,\ldots,a_k)) = N(f)(g(a_1),\ldots,g(a_k)).
\]
The category of $T$-algebras and their homomorphisms is denoted by $T$-\textbf{Alg}. A category $C$ is called a \emph{variety} of algebras if it is equivalent to $T$-\textbf{Alg} for some algebraic theory $T$.
\end{definition}

\begin{example}[Examples of Algebraic Theories]\label{Examples of theories}
Let $\sigma = (*, \to, e)$ be a signature with arities $(2, 2, 0)$. We define the terms $t(x,y) \coloneqq (x\to y)\to y$ and $v(x,y) \coloneqq t(t(x,y),x)$. Consider the following equations, which serve as potential axioms for various algebraic theories:
\begin{multicols}{2}\footnotesize
\begin{enumerate}
    \item $(x * y) * z \approx x * (y * z)$ \hfill (Associativity)
    \item $x * e \approx x$ \hfill (Right identity)
    \item $e*x \approx x$ \hfill (Left identity)
    \item $x * y \approx y * x$ \hfill (Commutativity)

    \item $x\to x \approx e$ \hfill (Reflexivity)
    \item $e\to x \approx x$ \hfill (Unit-reduction)
    \item $x*(x\to y)\approx y*(y\to x)$ \hfill (Hoop-symmetry)
    \item $x\to (y\to z) \approx (y*x)\to z$ \hfill (Residuation)
    \item $x*x\approx x$\hfill (Idempotency)
    \item $x*y \approx y*(y\to x)$ \hfill (Fusion)
    \item $x\to e \approx e$\hfill (Unit-absorption / T-axiom\footnote{T-axiom (Top/Truth) is not a standard in name combinatory logic, it follows from Reflexivity and the K-axiom. The letter T references to top and truth and so it motivates the name.})

    \item $x*(x\to y) \approx y$ \hfill (Lower division)
    \item $x\to (x*y) \approx y$ \hfill (Upper division)

    \item $x\to (y\to x)\approx e$\hfill (Projection / K-axiom)
    \item $x\to t(x,y) \approx e$\hfill (Modus ponens)
    \item $x\to (y\to z)\approx y\to (x\to z)$ \hfill \\(Implicative commutativity / C-axiom)
    \item $(x\to y)\to ((z\to x)\to (z\to y)) \approx e$\hfill \\(Compositionality / B-axiom)
    \item $(z\to (x\to y))\to ((z\to x)\to (z\to y)) \approx e$ \hfill (Self-distributivity / S-axiom)
    \item $t(x,y)* (t(x,y)\to x) \approx x$ \hfill (Monotone division)
    \item $(x\to y)\to (x\to z ) \approx (y\to x)\to (y\to z)$\hfill (Conditional exchange / H-axiom)
    \item $(t(x,y)\to x)\to (t(x,y)\to z) \approx (x\to z)$ \hfill (Monotone exchange /M-axiom\footnote{M-axiom is a new notion and an implicative version of the Monotone division (19). In the presence of Unit-reduction and Modus ponens, the $M$-axiom is equivalent to the H-axiom holding for comparable elements $x,y$.})
    \item $v(x,y) \approx v(y,x)$ \hfill (Cornish condition J)
\end{enumerate}
\end{multicols}
When defining a theory using a subset of the equations above, we implicitly assume the signature contains only the necessary function symbols. Below are examples of theories defined using these axioms.
    \begin{enumerate}[label=(\alph*)]
    \item \textbf{Pointed Semilattices:} The theory defined by equations (1)--(4) and (9) (associativity, identities, commutativity, idempotency). These algebras are equivalent to idempotent commutative monoids with identity $e$.

    \item \textbf{Left-Loops and Groups:} The theory of \emph{left-quasigroups} is defined by the division axioms (12) and (13). Adding the identity laws (2) and (3) defines the theory of \emph{left loops}. If associativity (1) is also added, we obtain the theory of \emph{groups}.

    \item \textbf{Hoops and Heyting Semilattices:} The theory of \emph{hoops} comprises of equations (1)--(8) (axioms for a commutative monoid structure with Reflexivity, Unit-reduction, Hoop-symmetry, and Residuation). Adding Fusion (10) yields the theory of \emph{Heyting semilattices}. In any hoop $H$, the following conditions are equivalent for an element $a \in H$:
    \begin{itemize}
        \item $a*a = a$ \hfill (Idempotency)
        \item $a*b= b*(b\to a)$ for all $b\in H$ \hfill (Fusion)
        \item $(a\to (b\to c))\to ((a\to b)\to (a\to c)) = e$ for all $b,c\in H$ \hfill (Self-distributivity)
    \end{itemize}
    The relation $a\leq b \iff a\to b = e,$ for $a,b\in H$, defines a partial order on $H$ and the meet $a\land b$ is equal to $a*(a\to b)$ for $a,b\in H$.

    \item \textbf{Implicative algebras:} These are algebras over the signature $(\to,e)$ satisfying:
    \begin{itemize}
        \item Reflexivity (5) and Unit-reduction (6).
        \item Implicative anti-symmetry: $(x\to y \approx e \land y\to x \approx e) \implies x \approx y$.
        \item Implicative monotonicity: $y\to z \approx e \implies (x\to y)\to (x\to z) \approx e$.
    \end{itemize}
    Different classes of implicative algebras are often denoted by concatenating letters from $\{H, M,  S, B, C, K, T\}$ corresponding to axioms (20), (21), (18), (17), (16), (14), (11), respectively. For instance, HBCK-algebras satisfy (20), (17), (16), and (14). When Modus ponens (15) holds (i.e., $x \le t(x,y)$), Implicative monotonicity is equivalent to the equation $(x\to y)\to (x\to t(y,z)) \approx e$, which becomes $(y\to z)\to ((x\to y)\to (x\to z)) \approx e$, the B-axiom, after a double application of Implicative commutativity. This shows that the $B$-axiom is equivalent to the Implicative monotonicity, given Implicative commutativity and hence C-algebras and BC-algebras coincide. Notice how the relation $a\leq b\iff a\to b = e; a,b\in M$, defines a partial order in each implicative algebra $M$.
    
    It is known that HBCK-algebras are precisely the $(\to,e)$-subreducts of hoops \cite{Bloki1993}. This implies they satisfy the Cornish condition J (22), which in turn allows the Implicative anti-symmetry axiom to be replaced by the equation $v(x,y) \approx v(y,x)$, making the class of HBCK-algebras a variety. Kowalski provided a direct syntactic derivation of the Cornish condition J from the quasi-equational HBCK-axioms \cite{Kowalski1994}. Motivated by Kowalski's work, we demonstrate in Corollary \ref{Algebras forming varieties} that MC-algebras (equivalently MBC-algebras) form a variety of algebras. It has been shown that $L$-algebras, defined as HT-algebras, do not form a variety \cite{facchini2023idealscongruenceslalgebrasprelalgebras}. Neither do the BCK-algebras \cite{Wronski1983}. Corollary \ref{Algebras forming varieties} shows that $L$-algebras satisfying the $C$-axiom, or HCT-algebras, do form a variety of algebras, but these are exactly the same as HBCK-algebras.

    \item \textbf{Johnstone Algebras:} The $(*,\to, e)$-theory $J$ of Johnstone algebras consists of axioms Reflexivity, Unit-reduction and Monotone division (5,6,19). The theory $J$ is constructed to ensure that the Johnstone protomodular terms (defined in Section 2) satisfy the protomodularity equations, thereby guaranteeing that the category $J$-\textbf{Alg} is semi-abelian (as it is pointed). The axioms imply Right identity (2) and that the relation $x\leq y \iff x\to y \approx e$ is reflexive and anti-symmetric.
    Prominent examples like the theories of left loops, groups, hoops, and Heyting semilattices are all extensions of $J$. For instance, hoops satisfy (5) and (6) by definition. They also satisfy Monotone division (19), as demonstrated by the following derivation (using properties derivable in hoops):
    \begin{align*}
        t(x,y)* (t(x,y)\to x)
        &\approx x *(x\to t(x,y)) \tag{Hoop symmetry}\\
        &\approx x*e  \tag{Modus ponens}\\
        &\approx x  \tag{Right identity}
    \end{align*}
    Notice that one can deduce from Residuation and Monotone division the M-axiom:
    $$
    (\bar{x}\to x)\to (\bar{x}\to z)\approx (\bar{x}*(\bar{x}\to x))\to z\approx x\to z
    $$
    The M-axiom can be seen as a weakening of Conditional Exchange, H-axiom (20), and acts as a substitution rule for equivalent terms, in implicative algebras that satisfy Modus ponens, $x\leq t(x,y)$.
    \end{enumerate}
\end{example}\begin{definition}[Term Algebra]
Let $T$ be a $\sigma$-theory. The set $\mathrm{Term}$ of $\sigma$-terms naturally becomes a $\sigma$-model by defining
\[
\mathrm{Term}(f)\colon \mathrm{Term}^n\to\mathrm{Term},\quad (t_1,\ldots,t_n)\mapsto f(t_1,\ldots,t_n),
\]
for each $f\in F$ of arity $n$. Define the equivalence relation $\sim$ on $\mathrm{Term}$ by 
\[
t_1\sim t_2\quad\Longleftrightarrow\quad T\vdash t_1\approx t_2.
\]
The quotient $\mathrm{Term}/_\sim$, equipped with the induced operations, is called the \emph{term algebra} of $T$. The elements of $\mathrm{Term}/_\sim$ are denoted by $[t]$ for $t\in \mathrm{Term}$. 
\end{definition}

\begin{theorem}[Soundness and Completeness]\label{Soundness and completeness}
    Let $T$ be an algebraic $\sigma$-theory. Then $T\vdash \phi$ if and only if $T\vDash \phi$ for any $\sigma$-equation $\phi$. Especially, the term algebra of $T$ satisfies exactly those equations provable from $T$.
\end{theorem}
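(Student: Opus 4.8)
The plan is to prove the two implications separately: soundness, $T\vdash\phi\implies T\vDash\phi$, by a routine induction on derivations, and completeness, $T\vDash\phi\implies T\vdash\phi$, via the Lindenbaum--Tarski term algebra.

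For soundness I would fix an arbitrary $T$-algebra $M$ and show by induction on the inductive construction of the deduction set $D_T$ that $M\vDash\psi$ for every $\psi\in D_T$. The base cases are Extension (immediate, since $M$ is a $T$-algebra) and Reflexivity ($M(t)=M(t)$ trivially), and the Symmetry and Transitivity steps are immediate from the symmetry and transitivity of equality of functions $M^V\to M$. The only step requiring a short computation is Substitution: assuming $M\vDash t_1(x_1,\dots,x_n)\approx t_2(x_1,\dots,x_n)$ and $M\vDash s_i\approx s_i'$ for $i\le n$, one takes an arbitrary assignment $s\colon V\to M$, pushes $\overline{s}$ through the substitution using that $\overline{s}$ is the unique $\sigma$-homomorphic extension of $s$, and matches $\overline{s}\big(t_1(s_1,\dots,s_n)\big)$ with $\overline{s}\big(t_2(s_1',\dots,s_n')\big)$ by invoking the two hypotheses coordinatewise. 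This yields $T\vDash\phi$ whenever $T\vdash\phi$.

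For completeness the key object is the term algebra $\mathrm{Term}/_\sim$. First I would check that $\sim$ is a congruence on $\mathrm{Term}$, so that the induced operations $[f]([t_1],\dots,[t_n])\coloneqq [f(t_1,\dots,t_n)]$ are well defined; this is precisely where the Substitution rule (together with Reflexivity for the arguments left unchanged) is used. Next I would verify that $\mathrm{Term}/_\sim$ is a $T$-algebra: given an axiom $(t_1\approx t_2)\in T$ with variables among $x_1,\dots,x_k$ and an assignment $s\colon V\to\mathrm{Term}/_\sim$, choose a lift $\rho\colon V\to\mathrm{Term}$ with $s(x)=[\rho(x)]$, observe $\overline{s}(t_i)=[t_i(\rho(x_1),\dots,\rho(x_k))]$ by uniqueness of the homomorphic extension, and apply Substitution to the axiom $t_1\approx t_2$ and the identities $\rho(x_j)\approx\rho(x_j)$ to get $T\vdash t_1(\rho(x_1),\dots,\rho(x_k))\approx t_2(\rho(x_1),\dots,\rho(x_k))$, hence $\overline{s}(t_1)=\overline{s}(t_2)$. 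Finally, writing $\phi$ as $t_1\approx t_2$, apply the hypothesis $T\vDash\phi$ to the $T$-algebra $\mathrm{Term}/_\sim$ under the generic assignment $\eta\colon x\mapsto[x]$: since $\overline{\eta}(t)=[t]$ for every term $t$, we obtain $[t_1]=[t_2]$, i.e.\ $T\vdash\phi$. The final sentence is then immediate: $\mathrm{Term}/_\sim\vDash\phi$ iff $T\vdash\phi$, the forward direction being the generic-assignment argument just given and the converse being soundness applied to the single $T$-algebra $\mathrm{Term}/_\sim$.

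I expect the only genuinely content-bearing step to be the verification that $\mathrm{Term}/_\sim$ is a $T$-algebra, since it requires carefully tracking how the syntactic Substitution rule turns the axioms of $T$ into equations satisfied under \emph{arbitrary} assignments; every other part is bookkeeping with the unique homomorphic extension $\overline{s}$. One minor point to note is that $V$ is countably infinite, which guarantees enough variables for the lifts $\rho$ and ensures no term leaves the language, so no cardinality subtlety intrudes.
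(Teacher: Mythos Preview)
Your proposal is correct and follows essentially the same route as the paper: soundness by induction on the closure rules defining $D_T$ (with Substitution as the only nontrivial case), and completeness via the term algebra $\mathrm{Term}/_\sim$, verified to be a $T$-algebra and then evaluated at the generic assignment $x\mapsto[x]$. The only remark I would drop is the aside about $V$ being countably infinite to ``guarantee enough variables for the lifts $\rho$'': the lift $\rho$ merely chooses a representative of each equivalence class and needs no fresh variables, so no cardinality consideration is actually involved.
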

\begin{proof}
    The forward direction, soundness, is proven by induction. Let $M$ be a $T$-algebra. Consider the set $I$ of the $\sigma$-equations $M$ satisfies. We show $D_T\subset I$. Clearly, $I$ is reflexive, anti-symmetric and transitive. For the substitution condition, assume that $t_1(x_1,\ldots, x_N)\approx t_2(x_1,\ldots, x_n),s_i\approx s_i'\in I$ for $i\leq n$. Now for $S\colon V\to M$ we have 
    \begin{align*}
    M(t_1(s_1,\ldots, s_n)) 
    &= t_1(M(s_1),\ldots, M(s_n)) \tag{Structural induction on terms}\\
    &= t_2(M(s_1),\ldots, M(s_n)) \tag{$t_1\approx t_2\in I$}\\
    &= t_2(M(s_1'),\ldots, M(s_n'))\tag{$s_i\approx s_i'\in I,i\leq n$}\\
    &= M(t_2(s_1',\ldots, s_n')),
    \end{align*}
    where we suppressed the notation $M(t)(S)$ to $M(t)$, and so $t_1(s_1,\ldots, s_n)\approx t_2(s_1',\ldots, s_n')\in I$.

    We show the converse, completeness. Consider the associated term algebra $M = \mathrm{Term}/_{\sim}$, where $\sim$ is the provability relation. Notice that by the definition of deduction the function $\mathrm{Term}(f)\colon \mathrm{Term}^n\to \mathrm{Term}$ induces the function $M(f)\colon M^n\to M$ for function symbols $f$ in $\sigma$.. Thus $M$ is a $\sigma$-model. We need to see that $M$ satisfies the equations in $T$.
    
    Assume that $t_1(x_1,\ldots, x_n) \approx t_2(x_1,\ldots, x_n)\in T$. Let $s\colon V\to M$ and denote $s(x_i) = [s_i]$, where $s_i\in\mathrm{Term}$ for $i\in\N$. We need to show that $t_1([s_1],\ldots,[s_n]) = t_2([s_1],\ldots,[s_n])$, which by induction on the structure of terms is equivalent to $[t_1(s_1,\ldots, s_n)]=[t_2(s_1,\ldots, s_n)]$. Since $t_1(x_1,\ldots, x_n)\approx t_2(x_1,\ldots,x_n)\in T$, then by the substitution rule $T\vdash t_1(s_1,\ldots, s_n)\approx t_2(s_1,\ldots, s_n)$. Hence $[t_1(s_1,\ldots, s_n)] = [t_2(s_1,\ldots,s_n)]$ and so $M\vDash t_1\approx t_2$. Thus $M$ is a $T$-algebra. For completeness, assume that $M\vDash t_1(x_1,\ldots, x_n)\approx t_2(x_1\ldots, x_n)$. Thus $t_1([x_1],\ldots,[x_n]) = t_2([x_1],\ldots,[x_n])$ and therefore $[t_1(x_1,\ldots, x_n)] = [t_2(x_1,\ldots, x_n)]$ and hence $T\vdash t_1\approx t_2$. 
\end{proof}
\begin{definition}[Deducing Quasi-equations]\label{def:DeducingQuasi}
    Let $\sigma$ be a signature. We call a symbol $c$ independent of $\sigma$, if $c$ is neither a variable nor a function symbol of $\sigma$. Let $T,T'$ be $\sigma$–theories and let $P(x_1,\ldots, x_n)$ be a $\sigma$-equation. We denote $T\vdash \bigwedge T'\Rightarrow P(x_1,\ldots, x_n)$ and say that $T$ proves that $P$ is entailed by $T'$ if $T\cup\{Q(a_1,\ldots, a_m)\mid Q(x_1,\ldots, x_m)\in T'\}\vdash P(a_1,\ldots, a_n)$ where the symbols $a_i$ for $i\in\N$ are pairwise different symbols independent from $\sigma$. The theories $T$ and $T'$ are considered $\sigma'$ theories, where $\sigma'$ extends $\sigma$ by the addition of the constant symbols $a_i,i\in\N$. If $T = \{Q_1,\ldots, Q_n\}$, we will often denote $\bigwedge T$ as $(Q_1\wedge\cdots \wedge Q_n)$.
\end{definition}

\begin{theorem}
    Let $T$ and $T'$ be $\sigma$-theories. Let $P(x_1,\ldots, x_n)$ be a $\sigma$-equation. Then $T\vdash \bigwedge T'\Rightarrow P$ if and only if for all $T$–algebras $M$ the equation $P(r_1,\ldots, r_n)$ holds for those $r_i\in M,i\in\N$, where $Q(r_1,\ldots, r_m)$ holds for each $Q(x_1,\ldots, x_m)\in T'$.
\end{theorem}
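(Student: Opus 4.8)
The plan is to deduce the statement from the already-proved Soundness and Completeness theorem (Theorem~\ref{Soundness and completeness}), applied not to $\sigma$ but to the enlarged signature $\sigma'$ obtained by adjoining the constants $a_i$, $i\in\N$, and to the enlarged theory
\[
T^\ast \coloneqq T\cup\{Q(a_1,\ldots,a_m)\mid Q(x_1,\ldots,x_m)\in T'\}.
\]
By Definition~\ref{def:DeducingQuasi}, $T\vdash\bigwedge T'\Rightarrow P$ means precisely $T^\ast\vdash P(a_1,\ldots,a_n)$ as an assertion about the $\sigma'$-theory $T^\ast$. Theorem~\ref{Soundness and completeness}, now invoked over $\sigma'$, turns this into the semantic statement that every $\sigma'$-algebra $M'$ with $M'\vDash T^\ast$ also satisfies $M'\vDash P(a_1,\ldots,a_n)$. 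So the entire content of the theorem is the translation of this statement about $T^\ast$-algebras into the desired statement about $T$-algebras equipped with a chosen tuple of elements.

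The crux is the observation I would isolate as a small lemma: a $\sigma'$-algebra is exactly the same data as a $\sigma$-algebra $M$ together with a function $r\colon\N\to M$, $i\mapsto r_i\coloneqq M'(a_i)$, because $\sigma'$ differs from $\sigma$ only by the new constant symbols. Under this identification, (i) since no equation of $T$ mentions any $a_i$, $M'\vDash T$ iff the $\sigma$-reduct $M$ satisfies $T$; and (ii) a ``ground'' equation such as $Q(a_1,\ldots,a_m)$ or $P(a_1,\ldots,a_n)$ contains no variables, so its evaluation map $M'(-)$ is constant, with value obtained by substituting $r_i$ for $a_i$; hence $M'\vDash Q(a_1,\ldots,a_m)$ iff $Q(r_1,\ldots,r_m)$ holds in $M$, and likewise for $P$. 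This is routine, but the one thing to keep straight is that the $a_i$ are genuine constant symbols, so validity in $M'$ of these equations is an equality of \emph{specific} elements of $M$, not validity under all assignments.

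With the lemma in hand both directions are immediate. Forward: assuming $T^\ast\vdash P(a_1,\ldots,a_n)$, soundness gives $T^\ast\vDash P(a_1,\ldots,a_n)$; given a $T$-algebra $M$ and a tuple $(r_i)_{i\in\N}$ in $M$ for which every $Q(r_1,\ldots,r_m)$, $Q\in T'$, holds, equip $M$ with the $\sigma'$-structure $M'(a_i)=r_i$; by (i)--(ii), $M'\vDash T^\ast$, hence $M'\vDash P(a_1,\ldots,a_n)$, i.e.\ $P(r_1,\ldots,r_n)$ holds in $M$. Converse: suppose $P$ holds in every such situation; to get $T^\ast\vdash P(a_1,\ldots,a_n)$ it suffices by completeness to verify $T^\ast\vDash P(a_1,\ldots,a_n)$, so take any $\sigma'$-algebra $M'$ with $M'\vDash T^\ast$, let $M$ be its $\sigma$-reduct and $r_i=M'(a_i)$; by (i)--(ii), $M$ is a $T$-algebra and every $Q(r_1,\ldots,r_m)$ holds, so the hypothesis yields $P(r_1,\ldots,r_n)$ in $M$, which is exactly $M'\vDash P(a_1,\ldots,a_n)$.

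I do not expect a genuine obstacle here; the result is essentially a repackaging of Theorem~\ref{Soundness and completeness} through the ``constants-as-elements'' correspondence. The only delicate point is bookkeeping: handling the (possibly infinitely many) adjoined constants $a_i$ uniformly even though $T'$ and $P$ involve only finitely many of them, and ensuring the indexing $i\in\N$ of the constants is matched consistently with the indexing of the chosen elements $r_i$.
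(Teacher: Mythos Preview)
Your proposal is correct and takes exactly the approach the paper intends: the paper's own proof consists of the single sentence ``The proof is a simple application of the Soundness and Completeness Theorem~\ref{Soundness and completeness}.'' You have simply unpacked that application in full, making explicit the constants-as-elements correspondence between $\sigma'$-algebras and $T$-algebras with a distinguished tuple, which is precisely the routine bookkeeping the paper leaves implicit.
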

\begin{proof}
    The proof is a simple application of the Soundness and Completeness Theorem \ref{Soundness and completeness}.
\end{proof}
\begin{definition}
    Let $T$ be an algebraic $\sigma$-theory. Let $P(x,y)$ be a $\sigma$-equation. We say that the equation $P(x,y)$ defines an \emph{equational $T$-order}, if 
    \begin{align*}
        T\vdash& P(x,x) \tag{$P$-Reflexivity}\\
        T\vdash& (P(x,y)\wedge P(y,x))\Rightarrow x \approx  y\tag{$P$-Anti-symmetry}\\
        T\vdash& (P(x,y)\wedge P(y,z))\Rightarrow P(x,z)\tag{$P$-Transitivity}
    \end{align*}
    If $P(x,y)$ defines an equational $T$-order, we often denote the equation $P(x,y)$ by $x\leq y$. 
    We say that the equation $P$ is semi-determined by a $\sigma$-term $t$, if
    $$
    T\vdash P(x,y)\Rightarrow t(x,y) \approx  y.
    $$
    If in addition
    $$
    T\vdash t(x,y)\approx y\Rightarrow P(x,y),
    $$
    then we say that the equation $P$ is \emph{determined} by the term $t$. Lastly, we call $P$ \emph{strongly determined} by $t$, if $P$ is semi-determined by $t$ and $T\vdash P(x, t(x,y))$. 
\end{definition}

\begin{theorem}\label{thm: characterizing reflectivity and transitivity}
    Let $T$ be an algebraic $\sigma$-theory. Consider a $\sigma$-equation $x\leq y$ semi-determined by a $\sigma$-term $t(x,y)$. Denote $v(x,y) = t(t(x,y),x)$. Then the following holds:
    \begin{enumerate}
        \item Let $E(x,y)$ be a $\sigma$-equation. Then 
        $$
        T\vdash E(x,t(x,y)) \text{ implies } T\vdash x\leq y\Rightarrow E(x,y) 
        $$
        
        Moreover, the converse holds, if the equation $x\leq y$ is strongly determined by the term $t(x,y)$. 

        \item If $x\leq y$ is determined by $t(x,y)$, then $\leq$ is reflexive if and only if $T\vdash t(x,x) \approx x$.
        \item If $T$ proves either $v(x,y) \approx v(y,x)$ or $v(x,y) \approx t(v(x,y),y)$, then $\leq$ is anti-symmetric.
        \item Transitivity of $\leq$ is implied by $x\leq t(t(x,y),z)$ and the converse holds, if $\leq$ is strongly determined by $t$. 
    \end{enumerate}
\end{theorem}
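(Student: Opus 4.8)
The plan is to reduce all four statements to elementwise reasoning inside an arbitrary $T$-algebra, using the completeness half of Theorem~\ref{Soundness and completeness} together with its quasi-equational companion (the theorem characterizing $T\vdash\bigwedge T'\Rightarrow P$ semantically). Under this dictionary the hypothesis that $x\le y$ is semi-determined by $t$ becomes the elementwise rule ``$P(r,s)$ implies $t(r,s)=s$'' valid in every $T$-algebra, and ``strongly determined'' adds ``$P(r,t(r,s))$ holds for all $r,s$''. Once these translations are fixed, each part is a short substitution computation, and the only thing to watch is the logical bookkeeping of which implications carry one, two, or three free variables.

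For (1), if $T\vdash E(x,t(x,y))$ and $r,s$ lie in a $T$-algebra with $P(r,s)$, then $t(r,s)=s$ by semi-determination, so the instance $E(r,t(r,s))$ of the hypothesis is literally $E(r,s)$; this gives $T\vdash x\le y\Rightarrow E(x,y)$. Conversely, under strong determination $P(r,t(r,s))$ holds for all $r,s$, so plugging $(r,t(r,s))$ into $x\le y\Rightarrow E(x,y)$ yields $E(r,t(r,s))$ for all $r,s$, i.e. $T\vDash E(x,t(x,y))$, hence $T\vdash E(x,t(x,y))$. Part (2) is the case $E=P$, $y=x$: $T\vdash P(x,x)$ with semi-determination forces $t(r,r)=r$ everywhere, so $T\vdash t(x,x)\approx x$; conversely $T\vdash t(x,x)\approx x$ together with the ``determined'' implication $t(x,y)\approx y\Rightarrow P(x,y)$ (taken at $y=x$) gives $P(r,r)$ for all $r$, i.e. reflexivity of $\le$.

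For (3), take $r,s$ with $P(r,s)$ and $P(s,r)$; semi-determination gives $t(r,s)=s$ and $t(s,r)=r$, hence $v(r,s)=t(t(r,s),r)=t(s,r)=r$ and symmetrically $v(s,r)=s$. If $T\vdash v(x,y)\approx v(y,x)$ then $r=v(r,s)=v(s,r)=s$; if instead $T\vdash v(x,y)\approx t(v(x,y),y)$ then $r=v(r,s)=t(v(r,s),s)=t(r,s)=s$; either way anti-symmetry holds. For (4), the forward direction: given $P(r,s)$ and $P(s,u)$, semi-determination gives $t(r,s)=s$ and $t(s,u)=u$, so $t(t(r,s),u)=t(s,u)=u$, and the hypothesis $T\vdash x\le t(t(x,y),z)$ says $P(r,t(t(r,s),u))$ always holds, which is exactly $P(r,u)$. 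For the converse under strong determination, $P(r,t(r,s))$ and, after relabelling, $P(t(r,s),t(t(r,s),u))$ hold for all $r,s,u$, so transitivity applied to the chain $r,\ t(r,s),\ t(t(r,s),u)$ gives $P(r,t(t(r,s),u))$ for all $r,s,u$, i.e. $T\vDash x\le t(t(x,y),z)$, which is therefore provable.

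The main obstacle is not any single calculation but the clean passage between syntax and semantics: one must invoke the quasi-equational version of soundness/completeness (so that ``$T\vdash\bigwedge T'\Rightarrow P$'' may be tested on arbitrary elements of arbitrary $T$-algebras) and keep track of exactly where strong determination, rather than mere semi-determination, is needed --- namely only in the converses of (1) and (4) and in the ``determined'' hypothesis of (2) --- so that the statements are established with their stated hypotheses and not accidentally strengthened ones.
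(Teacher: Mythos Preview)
Your proposal is correct and follows essentially the same approach as the paper: both reduce each item to the substitution $t(r,s)=s$ under the hypothesis $P(r,s)$, compute $v(r,s)=r$ from $P(r,s)\wedge P(s,r)$ in part~(3), and obtain the converse of~(4) by chaining two instances of strong determination through transitivity. The only cosmetic difference is that you argue semantically in an arbitrary $T$-algebra throughout (invoking the quasi-equational completeness theorem), whereas the paper phrases parts~(3) and~(4) syntactically via fresh constants as in Definition~\ref{def:DeducingQuasi}; the underlying computations are identical.
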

\begin{proof}
    \hfill
    \begin{enumerate}
        \item The forward direction is clear, since $x\leq y$ is semi-determined by $t(x,y)$.
            For the partial converse, assume that $T\vdash x\leq y\Rightarrow E(x,y)$. Let $M$ be a $T$-algebra. Let $m,n\in M$. By completeness, it suffices to show that $E(m,t(m,n))$. Since $\leq$ is strongly determined by $t$, $m\leq t(m,n)$ holds and thus by soundness $E(m,t(m,n))$. 
        \item Clear.
        \item Let $a$ and $b$ be symbols independent from $\sigma$. The theory $T\cup \{a\leq b, b\leq c\}$ proves that 
        $$
        v(a,b) = t(t(a,b), a) \approx t(b,a)\approx a
        $$
        and thus if $T\vdash v(x,y)\approx v(y,x)$ it directly follows that $T\vdash (x\leq y\land y\leq x)\Rightarrow x\approx y$. Similarly with the equation $v(x,y) \approx t(v(x,y), y)$.

        \item Assume that $T\vdash x\leq t(t(x,y),z)$. Now the theory $T\cup\{a\leq b, b\leq c\}$ proves that $t(a,b) \approx b$ and $t(b,c) \approx c$ by the fact that $\leq$ is semi-determined by $t$. Thus we attain that $T\cup\{a\leq b, b\leq a\}$ proves the the equations $a\leq t(t(a,b),c), a\leq t(b,c)$ and hence $a\leq c$.
        
        For the partial converse, assume that $\leq$ is strongly determined by $t$. Consider the following sequence of statement. 
        \begin{align*}
            T&\vdash (x\leq y\land y\leq z)\Rightarrow x\leq z\\
            T&\vdash (x\leq y)\Rightarrow  x\leq t(y,z)\\
            T&\vdash x\leq t(t(x,y),z)
        \end{align*}
        The argument used in part $1$ applies here and shows that these steps are equivalent, since $\leq$ is strongly determined by $t$. This shows the partial converse.\qedhere
        \end{enumerate}
\end{proof}

While Theorem \ref{thm: characterizing reflectivity and transitivity}(4) allows replacing the transitivity quasi-equation with an equation when the order $\leq$ is strongly determined by a term $t$, the anti-symmetry quasi-equation $(x\leq y\land y\leq x \implies x \approx y)$ generally resists such equational reduction. BCK-algebras illustrate this: although $x \leq y = (x \to y \approx e)$ is strongly determined by $t(x,y)=(x\to y)\to y$, anti-symmetry remains a quasi-equation, preventing BCK-algebras from forming a variety \cite{Wronski1983}.

However, Theorem~\ref{thm: characterizing reflectivity and transitivity} (3) identifies an equation sufficient for anti-symmetry: $v(x,y) \approx t(v(x,y),y)$, where $v(x,y)\coloneqq t(t(x,y),x)$. A heuristic motivation for this equation arises from applying the substitution principle (Theorem \ref{thm: characterizing reflectivity and transitivity}(1)) to the anti-symmetry quasi-equation, assuming strong determination:
\begin{align*}
    (x\leq y \land y\leq x) &\implies (x \approx y) \\
    \iff (t(x,y) \leq x&\implies x \approx t(x,y)) \tag{$x\leq t(x,y)$} \\
    \leadsto v(x,y) &\approx t(v(x,y),y) \tag{heuristic substitution; $t(t(x,y),x)$ substituted on $x$}
\end{align*}
While the first equivalence is justified under strong determination, the step yielding $v(x,y) \approx t(v(x,y),y)$ is heuristic, not a rigorous deduction via the theorem alone. Nevertheless, it suggests this equation as a natural candidate for capturing anti-symmetry equationally.

\begin{corollary}\label{Order corollary}
    Let $\sigma$ be a signature and let $t(x,y)$ be a $\sigma$-term. Denote the equation $t(x,y) \approx y$ as $x\leq y$ and $v(x,y) = t(t(x,y),x)$ and consider the $t$-theory $T$.
    \begin{align*}
            t(x,x) &\approx x\tag{$t$-reflexivity}\\
            v(x,y)&\approx t(v(x,y),y)\tag{$t$-anti-symmetry}\\
            t(x,t(t(x,y),z)) &\approx t(t(x,y),z)\tag{$t$-transitivity}
    \end{align*}
    Then $\leq$ is a $T$-order.
\end{corollary}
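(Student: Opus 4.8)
The plan is to read the corollary directly off Theorem~\ref{thm: characterizing reflectivity and transitivity}. The crucial preliminary remark is that here the relation ``$x\leq y$'' is by definition the equation $t(x,y)\approx y$, so it is trivially \emph{determined} by the term $t(x,y)$: the two implications $\bigl(t(x,y)\approx y\bigr)\Rightarrow\bigl(t(x,y)\approx y\bigr)$ demanded by the definition of ``determined'' are immediate from the Extension rule. In particular $x\leq y$ is semi-determined by $t$, which is the standing hypothesis of Theorem~\ref{thm: characterizing reflectivity and transitivity}.

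Granting this, each of the three clauses of an equational $T$-order follows from a matching clause of that theorem together with one axiom of $T$. Reflexivity of $\leq$ comes from Theorem~\ref{thm: characterizing reflectivity and transitivity}(2) and the $t$-reflexivity axiom $t(x,x)\approx x$. Anti-symmetry comes from Theorem~\ref{thm: characterizing reflectivity and transitivity}(3), since $T$ contains the $t$-anti-symmetry axiom $v(x,y)\approx t(v(x,y),y)$. Transitivity comes from Theorem~\ref{thm: characterizing reflectivity and transitivity}(4): the $t$-transitivity axiom $t\bigl(x,t(t(x,y),z)\bigr)\approx t(t(x,y),z)$ is verbatim the equation $x\leq t(t(x,y),z)$, and that part of the theorem says this implies transitivity of $\leq$.

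I would then add, although it is not needed for the statement, that $x\leq y$ is in fact \emph{strongly} determined by $t$ in $T$: substituting $x$ for $y$ in the $t$-transitivity axiom gives $t\bigl(x,t(t(x,x),z)\bigr)\approx t(t(x,x),z)$, and rewriting $t(x,x)$ as $x$ by $t$-reflexivity (and renaming $z$ to $y$) yields $t\bigl(x,t(x,y)\bigr)\approx t(x,y)$, i.e.\ $x\leq t(x,y)$. Via the converse halves of Theorem~\ref{thm: characterizing reflectivity and transitivity}(1),(4) this shows the three displayed axioms are not only sufficient but equivalent to $\leq$ being a $T$-order. There is no real obstacle in any of this: once the triviality of determination is noted, the argument is a short bookkeeping against Theorem~\ref{thm: characterizing reflectivity and transitivity}, and the only genuine computation — the two-step rewrite for strong determination — is entirely routine.
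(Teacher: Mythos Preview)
Your proof is correct and takes the same approach as the paper, which simply cites Theorem~\ref{thm: characterizing reflectivity and transitivity}(2,3,4). One caveat on your closing aside: the claimed full equivalence overreaches for anti-symmetry, since Theorem~\ref{thm: characterizing reflectivity and transitivity}(3) provides only the forward direction (the equation $v(x,y)\approx t(v(x,y),y)$ implies anti-symmetry, not conversely) --- but as you note, this extra remark is not needed for the corollary itself.
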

\begin{proof}
    Follows from Theorem \ref{thm: characterizing reflectivity and transitivity}(2,3,4).
\end{proof}

\begin{definition}\label{def: closure term}
    Let $T$ be an algebraic $\sigma$-theory with a term $t(x,y)$ also denoted $xy$. We fix the following notation $x\leq y\coloneqq xy \approx y$, $\bar{x} \coloneqq xy, \bar{\bar{x}} = (xy)z$ and $X = (xy)x$. Consider the equations
    \begin{multicols}{2}\small
    \begin{enumerate}
        \item $x \approx xx$ \hfill ($t$-reflexivity, Idempotency)
        \item $x\bar{\bar{x}} \approx \bar{\bar{x}}$ \hfill ($t$-transitivity)
        \item $\bar{x}x \approx  (\bar{x}x)y$ \hfill ($t$-anti-symmetry)
        \item $x\bar{x} \approx \bar{x}$ \hfill (Left-absorption)
        \item $\bar{x}y \leq \bar{x}$\hfill (Right-absorption)
        \item $xz \leq \bar{x}z$\hfill (Left-monotonicity)
        \item $\bar{x}(xz) \approx \bar{x}z$ \hfill (Flattening)
        \item $(\bar{\bar{x}}\bar{x})x \leq \bar{\bar{x}}x$\hfill (Closure stability)
        \item $(X\bar{x})x \leq Xx$\hfill (Weak closure stability)
    \end{enumerate}
    \end{multicols}
    We call $t$ a \emph{pre-order term}, if $T$ proves $t$-reflexivity and $t$-transitivity. If in addition $t$ satisfies $t$-anti-symmetry, it is called an \emph{order term}. We call $t$ a \emph{relative closure} if $T$ proves the equations (4)--(8). If $T$ proves $(5),(7)$ and $(9)$, then we say that $t$ is a weak relative closure.
\end{definition}
\begin{remark}\label{Rem: Antisymmetry upgrade}
    If the term $t(x,y)$ contains both variables $x$ and $y$, then the equations (1)--(9) in Definition \ref{def: closure term} are balanced.
\end{remark}
Both the meet $x\land y$ and the join $x\lor y$ of a bounded lattice satisfies all the nine equations. The equations $(1)-(6)$ determine that the term $t(x,y)$ behaves like closure operator (defining a monad) for a fixed $y$ in the induced posetal structure. In this context, Left-absorption relates to Modus ponens and shows that $t$ strongly determines $\leq$. Perhaps surprisingly, the term $(x\to y)\to y$ of MC-algebras satisfies all the equations as per the following theorems:

\begin{theorem}\label{thm: MC-algebras have relative closure term}
    Let $M$ be $(\to,e)$-algebra satisfying Reflexivity, Unit-reduction and Monotone exchange (M-axiom). We define the relation $\leq$ by setting $a\leq b\iff a\to b = e$ for $a,b\in M$. Consider the term $t(x,y) = (x\to y)\to y$. 
    \begin{enumerate}
        \item The equation $x\leq y$ is semi-determined by $t(x,y)$ and $\leq$ is a pre-order on $M$. 
        \item Assume that $(-)\to (-)$ is increasing in the second component and $M$ satisfies Implicative commutativity. Then $\to$ is decreasing in the first component. In addition, the term $t$ is both a pre-order term and a relative closure term in $M$. 
    \end{enumerate}
\end{theorem}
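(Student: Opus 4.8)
The plan is to reduce everything to manipulations of $\to$, Reflexivity and Unit‑reduction, using the M‑axiom essentially only in the form it takes along a comparable pair. For Part (1), semi‑determination is immediate ($x\to y\approx e$ gives $t(x,y)=(x\to y)\to y\approx e\to y\approx y$), and $\le$‑reflexivity is the Reflexivity axiom verbatim. For transitivity I would argue from $a\to b\approx e$, $b\to c\approx e$: first $t(a,b)=(a\to b)\to b\approx b$, then substitute this into the M‑axiom instantiated at $(a,b,w)$ to obtain $(b\to a)\to(b\to w)\approx a\to w$ for all $w$; taking $w=b$ (and Reflexivity) gives $(b\to a)\to e\approx a\to b\approx e$, and $w=c$ gives $a\to c\approx(b\to a)\to(b\to c)\approx(b\to a)\to e\approx e$.

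For Part (2) I would first record that Reflexivity and Implicative commutativity already give Modus ponens, $x\to t(x,y)=x\to((x\to y)\to y)\approx(x\to y)\to(x\to y)\approx e$; with this in hand $t(x,y)\approx y$ forces $x\to y\approx e$, so the $t$‑determined order agrees with $\le$ and Part (1) makes $t$ a pre‑order term. The M‑axiom next yields the transfer identity: $a\le b$ implies $a\to w\approx(b\to a)\to(b\to w)$ for all $w$; $w=b$ gives $(b\to a)\to e\approx e$, and a single C‑axiom rewrite turns $(b\to c)\to(a\to c)\approx(b\to c)\to((b\to a)\to(b\to c))$ into $(b\to a)\to e\approx e$, so $\to$ is antitone in its first argument. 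Two corollaries I will reuse: iterating antitonicity gives $a\le b\Rightarrow t(a,c)\le t(b,c)$, and adding $b\le a$ to the transfer identity gives that $\le$‑equivalent elements have the same left behaviour, $a\sim b\Rightarrow a\to w\approx b\to w$ for all $w$.

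Now, writing $\bar{x}=t(x,y)$: equation (4) is $x\le\bar{x}$ (Modus ponens) and (6) is $t(x,z)\le t(\bar{x},z)$, i.e.\ monotonicity of $t(-,z)$ applied to $x\le\bar{x}$. For (5), $t(\bar{x},y)\le\bar{x}$, the elements $\bar{x}\to y$ and $x\to y$ are $\le$‑equivalent (Modus ponens on one side, antitonicity‑on‑Modus‑ponens on the other), so $(\bar{x}\to y)\to y\approx(x\to y)\to y$ by the left‑behaviour corollary and $t(\bar{x},y)\to\bar{x}$ collapses to $e$. For (7), $t(\bar{x},t(x,z))\approx t(\bar{x},z)$, I would rewrite $\bar{x}\to t(x,z)\approx(x\to z)\to(\bar{x}\to z)$ by the C‑axiom, so the left side becomes $((x\to z)\to(\bar{x}\to z))\to((x\to z)\to z)$; since $\bar{x}\to z\le x\to z$, instantiating the M‑axiom at the comparable pair $(\bar{x}\to z,\,x\to z)$ collapses this to $(\bar{x}\to z)\to z=t(\bar{x},z)$. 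Equation (8), $t(t(\bar{\bar{x}},\bar{x}),x)\le t(\bar{\bar{x}},x)$, I would reduce via one C‑axiom rewrite to the claim $t(\bar{\bar{x}},\bar{x})\to x\approx\bar{\bar{x}}\to x$: one inequality is antitonicity on $\bar{\bar{x}}\le t(\bar{\bar{x}},\bar{x})$, and for the other I would establish the more symmetric statement that $x\le\bar{x}\le\bar{\bar{x}}$ already forces $t(\bar{\bar{x}},\bar{x})\to x\le\bar{\bar{x}}\to x$, exploiting $\bar{\bar{x}}=t(\bar{x},z)$, $\bar{x}=t(x,y)$ and the second‑argument monotonicity hypothesis.

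The hard part will be (8): the M‑axiom transfers cleanly only along comparabilities, and the comparability one would naively want here, $t(\bar{\bar{x}},\bar{x})\le\bar{\bar{x}}$, is false in general, so the collapse must be obtained indirectly. Over Heyting semilattices the claim $t(A,B)\to x\approx A\to x$ for $x\le B\le A$ is a short meet computation; the obstacle is to recode that meet‑free, purely implicatively — exactly the flavour of Kowalski's syntactic derivation — and it is here that the second‑argument monotonicity hypothesis genuinely enters, the rest of Part (2) (including ``$\to$ decreasing in the first argument'') not using it.
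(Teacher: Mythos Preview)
Your treatment of Part~(1) and of properties (4)--(7) in Part~(2) is correct and essentially the paper's argument; your derivation of first-argument antitonicity is actually a bit cleaner than the paper's, which routes through the B-axiom (hence through second-argument monotonicity), whereas your M+C argument does not.

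The genuine gap is Closure stability~(8). First, there is a direction slip: antitonicity on $\bar{\bar{x}}\le t(\bar{\bar{x}},\bar{x})$ gives $t(\bar{\bar{x}},\bar{x})\to x\le \bar{\bar{x}}\to x$, so the \emph{other} inequality you need is $\bar{\bar{x}}\to x\le t(\bar{\bar{x}},\bar{x})\to x$, not the one you wrote. Second, and more importantly, you give no argument for that hard direction beyond naming the ingredients; this is precisely the place where the proof is nontrivial, and ``exploiting $\bar{\bar{x}}=t(\bar{x},z)$, $\bar{x}=t(x,y)$ and second-argument monotonicity'' is not yet a proof. Your own final paragraph acknowledges that the naive comparability $t(\bar{\bar{x}},\bar{x})\le\bar{\bar{x}}$ fails, so the M-axiom cannot be applied there directly, but you do not say along which comparability it \emph{can} be applied.

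The paper's route (after renaming so that $x\le y\le z$ and $\bar z\coloneqq t(z,y)$) is to compute $(z\to x)\to(\bar z\to x)$ by first applying the transfer identity along the comparability $z\to x\le z\to y$ (this is where second-argument monotonicity is actually used), obtaining
\[
(z\to x)\to(\bar z\to x)\;=\;\bigl((z\to y)\to(z\to x)\bigr)\to\bigl((z\to y)\to(\bar z\to x)\bigr),
\]
then using $z\to y\le \bar z\to y$ (Modus ponens on $z\to y$) together with second-argument monotonicity and first-argument antitonicity to bound this below by $\bigl((z\to y)\to(z\to x)\bigr)\to\bigl((\bar z\to y)\to(\bar z\to x)\bigr)$, and finally collapsing both factors to $y\to x$ via the transfer identity along $y\le z$ and $y\le\bar z$. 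This yields $(z\to x)\to(\bar z\to x)\ge e$, hence $=e$ by Unit-reduction. Your proposal needs to supply an argument of this kind; as written, (8) is only a plan, not a proof.
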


\begin{proof}
    We will use a left associative notation that omits the implication arrow and where a space refers to a use of an implication arrow. For instance, $xyyxx$, $xy\ yx$ and $xyy$, we mean the terms $(((x\to y)\to y)\to x)\to x$, $(x\to y)\to (y\to x)$ and $(x\to y)\to y$, respectively.

    Unit-reduction implies that $x\le y$ is semi-determined by the term $t(x,y) = (x\to y)\to y$. By assumption, $\leq$ is reflexive. For transitivity consider the following:  Let $a\leq b\leq c$ in $M$. Now
        \begin{align*}
            ac
            &= (ba)(bc)\tag{Monotone exchange; $a\leq b$}\\
            &= (ba)e \tag{$b\leq c$}\\
            &= (ba)(bb)\tag{Reflexivity}\\
            &= ab\tag{Monotone exchange; $a\leq b$}\\
            &= e \tag{$a\leq b$}
        \end{align*}

    For the second part, assume that $M$ satisfies Implicative commutativity and $\to$ is increasing in the second component. Note that Implicative commutativity (C) together with Reflexivity directly imply Modus ponens (15). Thus $t$ strongly determines $\leq$ and so $t$ is a pre-order term by Theorem \ref{thm: characterizing reflectivity and transitivity}(4) and $t$ satisfies Left-absorption. We show that $\to$ is decreasing in the first component: Note that $M$ satisfies the B-axiom, $yz\ (xy\ xz) \approx e$, (Ex: \ref{Examples of theories}(d)) and by Implicative commutativity $M$ satisfies $xy\ (yz\ xz)\approx e$. This directly implies that $\to$ is decreasing in the first component. Hence $t(x,y)$ is increasing in $x$.

    Notice that Implicative commutativity yields that $(-)\to y\colon M^{op}\to M$ defines a mutual right self-adjoint for each $y\in M$. Thus $\bar{x}y\leq xy\leq\bar{x}y$ and especially, the Right-absorption $t(t(x,y),y)\leq  t(x,y)$ holds in $M$.
    
    Let $x,y, z\in M$ and denote $\bar{x} = xyy$ and $\bar{\bar{x}} = xyyzz$. For Flattening, we need to show that $(\bar{x}\ xzz)\ xzz = \bar{x} zz$. Now
    \begin{align*}
        (\bar{x}\ xzz)\ xzz 
        &= (xz\ \bar{x}z)\ xzz\tag{Implicative commutativity}\\
        &= \bar{x}zz \tag{Monotone exchange; $\bar{x}z\leq xz$}
    \end{align*}
    Lastly, we show that $M$ satisfies Closure stability. We may assume that $x\leq y\leq z$ and it suffices to show that $zx\leq \bar{z}x$, where $\bar{z} = zyy$, since this implies that $zyyxx\leq zxx$. Now
    \begin{align*}
        zx\ \bar{z}x
        &= (zy\ zx)(zy\ \bar{z}x)\tag{Monotone exchange; $zx\leq zy$}\\
        &\ge (zy\ zx)(\bar{z}y\ \bar{z}x)\tag{Self-adjointness of $(-)\to y$: $zy\leq \bar{z}y$}\\
        &= yx\ yx\tag{2xMonotone exchange; $y\leq z, y\leq \bar{z}$}\\
        &= e\tag{Reflexivity}
    \end{align*}
    Unit-reduction then implies that $zx\ \bar{z}x = e$ and thus $zx\leq \bar{z}x$ and hence $(\bar{\bar{x}}\bar{x})x \leq  \bar{\bar{x}}x$.\qedhere
\end{proof}

\begin{theorem}\label{thm: Anti-symmetry equationalized}
    Let $T$ be an algebraic $\sigma$-theory with a weak relative closure term $t(x,y)$, which we denote by $xy$. We set $x\leq y \coloneqq xy\approx y, \bar{x} \coloneqq xy, X \coloneqq (xy)x,\bar{y}\coloneqq yx$ and $Y\coloneqq (yx)y$. Let $M$ be a $T$-algebra. Then 
    $$
    M\vDash(x\leq y \land y\leq x)\Rightarrow x \approx y\text{ if and only if }M\vDash X \approx Xy
    $$
    Furthermore, $M\vDash X\approx Xy, y\leq xy$ yields the Cornish condition $J$, $(xy)x \approx (yx)y$, for $M$.
\end{theorem}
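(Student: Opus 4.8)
The plan is to handle the two directions of the equivalence separately and then to bootstrap the Cornish condition from the forward implication together with the extra hypothesis $y\leq xy$. Throughout, juxtaposition denotes $t$, so $\bar x = xy$, $X=(xy)x$, $Y=(yx)y$, and $x\leq y$ abbreviates $xy\approx y$. \emph{From $X\approx Xy$ to anti-symmetry:} this is Theorem~\ref{thm: characterizing reflectivity and transitivity}(3) applied to the equation $x\leq y$, which is tautologically semi-determined by $t$, and I would simply spell it out. Fix $a,b\in M$ with $a\leq b$ and $b\leq a$, so $ab=b$ and $ba=a$; then $X(a,b)=(ab)a=ba=a$, and instantiating $X\approx Xy$ at $(a,b)$ gives $a=X(a,b)=X(a,b)\cdot b=ab=b$. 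Hence $\leq$ is anti-symmetric, and no closure axiom is used.

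\emph{From anti-symmetry to $X\approx Xy$:} fix a $T$-algebra $M$ satisfying anti-symmetry and $a,b\in M$, and put $p=ab$ and $q=X(a,b)=pa$. Since $M$ is anti-symmetric, to conclude $q=q\cdot b$ it suffices to prove the two relations $q\leq q\cdot b$ and $q\cdot b\leq q$, and I expect to obtain both as consequences of $T$, hence valid in every $T$-algebra. This is where the calculus of Definition~\ref{def: closure term} is exploited: Flattening~(7) rewrites $q\cdot c\approx q\cdot(pc)$ for every $c$; Right-absorption~(5) supplies $pb\leq p$ and $qa\leq q$; and Weak closure stability~(9), instantiated along $x\mapsto a$, $y\mapsto b$, supplies the remaining comparison. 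The subtle point is that in a \emph{full} relative closure Left-absorption~(4) gives $u\leq\bar u$ for all $u$, whence $X\leq Xy$ is immediate; for a \emph{weak} relative closure (4), Left-monotonicity~(6) and Closure stability~(8) are all absent, and the content of this direction is precisely that (5), (7) and (9) together still yield both $X\leq Xy$ and $Xy\leq X$. With both inequalities established, anti-symmetry gives $q=q\cdot b$, i.e. $M\vDash X\approx Xy$.

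\emph{The Cornish condition $J$:} now assume in addition $M\vDash y\leq xy$. By the part just proved $M$ is anti-symmetric, and since both $X\approx Xy$ and $y\leq xy$ are stable under the swap $x\leftrightarrow y$, the algebra $M$ also satisfies $Y\approx Yx$ and $x\leq yx$. To prove $X\approx Y$ it then suffices, by anti-symmetry, to prove $X\leq Y$ alone (the reverse then follows by the $x\leftrightarrow y$ symmetry). The route I would take: from $x\leq yx=\bar y$ and the weak-closure axioms deduce $\bar x\leq Y$; a monotonicity step for $t(-,x)$ — extracted from (5), (7), (9) and the hypothesis $y\leq xy$, since Left-monotonicity~(6) is unavailable — upgrades this to $X=\bar x\cdot x\leq Y\cdot x$; and finally $Y\cdot x=Y$ because $M\vDash Y\approx Yx$. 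Hence $X\leq Y$, and by symmetry and anti-symmetry $X=Y$, the Cornish condition.

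\emph{Main obstacle:} the crux is the second part — deriving $X\leq Xy$ and $Xy\leq X$ purely from Right-absorption, Flattening and Weak closure stability, i.e. verifying that (9) is a sufficient surrogate for the missing axioms (4), (6), (8). The first part is immediate, and the Cornish argument is comparatively routine once the weak-closure manipulations are in place, though its monotonicity step needs the same care and genuinely invokes the extra hypothesis $y\leq xy$.
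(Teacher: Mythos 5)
Your converse direction is complete and agrees with the paper's (it is exactly the content of Theorem \ref{thm: characterizing reflectivity and transitivity}(3) and uses no closure axiom). The forward direction and the Cornish condition, however, are only announced, not proved: you correctly reduce $M\vDash X\approx Xy$ to the two inequalities $X\leq Xy$ and $Xy\leq X$, but you then explicitly defer their derivation, labelling it the ``main obstacle''. That derivation is the entire mathematical content of the theorem, so as it stands the proposal has a genuine gap.

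Moreover, the plan you state for filling it --- obtaining both inequalities ``as consequences of $T$, hence valid in every $T$-algebra'' --- is not how the argument goes and is unlikely to be realizable. In the paper's proof, anti-symmetry of $M$ is used repeatedly \emph{inside} the derivation of the two inequalities, not merely at the final step: one first combines Flattening and Right-absorption with anti-symmetry to upgrade Right-absorption to the equation $\bar{x}\approx\bar{x}y$, and this equation is then the workhorse for everything else --- it yields $\bar{x}\leq\bar{x}z$ (hence $X\leq Xy$ by substitution), the identity $X\bar{x}\approx Xy$, and the strengthening of Weak closure stability to the equation $(X\bar{x})x\approx X$, from which $(Xy)X\approx X$, i.e.\ $Xy\leq X$, follows via Flattening. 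Axioms (5), (7) and (9) alone do not appear to prove either inequality, so restricting yourself to pure consequences of $T$ would leave you stuck. The same issue affects your Cornish argument: the ``monotonicity step for $t(-,x)$'' you invoke is not available from the weak-closure axioms; the paper instead computes $\bar{x}\bar{y}\approx X$ and $\bar{y}\bar{x}\approx Y$ directly from $y\leq xy$ and Flattening, deduces $Y\leq XY$ from $y\leq xy$, and shows $XY\approx X$ using the already-established $X\bar{x}\approx X$. In short, the skeleton (two inequalities plus anti-symmetry, and the $x\leftrightarrow y$ symmetry for Cornish) is right, but the substance is missing, and the route you propose for it should be replaced by the anti-symmetry-assisted equational upgrades.
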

\begin{proof}
    The converse is clear. Assume $M\vDash(x\leq y \land y\leq x)\Rightarrow x \approx y$. Let $x,y,z\in M$. We upgrade the Right-absorption, $\bar{x}y\leq \bar{x}$, and the Closure stability, $(X\bar{x})x\leq Xx$, into equations. First, we have $(\bar{x}\bar{x})\bar{x} = (\bar{x}y)\bar{x} = \bar{x}$ and $\bar{x}(\bar{x}\bar{x}) = ((\bar{x}\bar{x})\bar{x})(\bar{x}\bar{x}) = \bar{x}\bar{x}$ by $1\times$Flattening and $2\times $Right-absorption. By anti-symmetry, $\bar{x} = \bar{x}\bar{x} = \bar{x}y$ holds. Second, if $(xy)z\leq x = xz$, then $x = (xy)z$: By Flattening and Right-absorption we have $x=((xy)z)x = ((xy)z)(xz) = ((xy)z)z = xyz$. This shows that the Closure stability, $(X\bar{x})x\leq Xx = X$ (by Right-absorption), strengthens to $(X\bar{x})x = X$. To proceed, note that $\bar{x}\leq \bar{x}z$, since 
    \begin{align*}
        \bar{x}(\bar{x}z)
        &= (\bar{x}y)(\bar{x}z)\tag{Right-absorption: $\bar{x} = \bar{x}y$}\\
        &= (\bar{x}y)z \tag{Flattening}\\
        &=  \bar{x}z \tag{Right-absorption: $\bar{x} = \bar{x}y$}
    \end{align*} and especially $X\leq Xy$. By Flattening and Right-absorption, the equation $X\bar{x} = (\bar{x}x)(\bar{x}y) = (\bar{x}x)y = Xy$ holds. Thus we have $X\leq Xy$ and 
    \begin{align*}
        (Xy)X 
        &= (Xy) (Xx)\tag{Right-absorption: $X = \bar{x}x = (\bar{x}x)x = Xx$}\\
        &= (Xy)x\tag{Flattening}\\
        &= X \tag{Weak closure stability; $(Xy)x = (X\bar{x})x = X$}
    \end{align*}
    Therefore $Xy\leq X$ and by anti-symmetry $X = Xy$ which is what we wanted to show.

    Next we assume that $b\leq ab$ for all $a,b\in M$ and show $X = Y$. Now
    \begin{align*}
        \bar{x}\bar{y} 
        &= (y\bar{x})(yx)\tag{$y\leq \bar{x}, \bar{y} = yx$}\\
        &= (y\bar{x}) x \tag{Flattening} \\
        &= X \tag{$y\leq \bar{x}, X = \bar{x}x$}
    \end{align*}
    and similarly $Y = \bar{y}\bar{x}$. Thus $\bar{y}\leq \bar{x}\bar{y} = X$ and hence 
    \begin{align*}
        XY
        &= X(\bar{y}\bar{x})\tag{$Y = \bar{y}\bar{x}$}\\
        &= X\bar{x} \tag{Flattening; $\bar{y}X = X$}\\
        &= X \tag{Previously: $X\bar{x} = Xy = X$}
    \end{align*}
    Since $Y\leq XY = X$ and similarly $X\leq Y$, it follows by anti-symmetry that $X = Y$.
\end{proof}
    The proof of Theorem \ref{thm: Anti-symmetry equationalized} generalizes the syntactic deduction given by Kowalski in \cite{Kowalski1994}. Interestingly, the Z3-theorem prover \cite{deMouraBjorner2008} is able to produce a proof for both parts in the Theorem \ref{thm: Anti-symmetry equationalized} independently \cite{Forsman2025}, but not for the Theorem \ref{thm: MC-algebras have relative closure term}. A major difference in these settings is the fact that equations being assumed and concluded in Theorem \ref{thm: Anti-symmetry equationalized} are mostly balanced and contain a single function symbol $t$. These equations are not only balanced but left-balanced equation, which are studied in \cite{forsman2024multicategoricalmetatheoremcompletenessrestricted}. These equations have their own restricted, but complete, deduction system, allowing some restrictions on the space for potential proofs.
    \begin{corollary}\label{Algebras forming varieties}
        The classes of MC,MBC,HBC,HBCK-algebras form varieties.
    \end{corollary}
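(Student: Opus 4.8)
The plan is to reduce each of the four classes to an equational axiomatisation plus the single quasi-equation of Implicative anti-symmetry, and then to kill that quasi-equation using Theorems~\ref{thm: MC-algebras have relative closure term} and \ref{thm: Anti-symmetry equationalized}. Recall that an implicative algebra is a $(\to,e)$-algebra satisfying Reflexivity, Unit-reduction, the quasi-equation of Implicative anti-symmetry ($x\to y\approx e \land y\to x\approx e \Rightarrow x\approx y$), and the quasi-equation of Implicative monotonicity, and that the letters $H,M,B,C,K$ adjoin the equations (20), (21), (17), (16), (14). The first step is to record two reductions already noted in Example~\ref{Examples of theories}(d): in the presence of the $C$-axiom (hence of Modus ponens, which follows from $C$ and Reflexivity), Implicative monotonicity is equivalent to the $B$-axiom, and $H$ together with Unit-reduction and Modus ponens proves the $M$-axiom. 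Consequently each of MC, MBC, HBC, HBCK is axiomatised by a purely equational theory $T$ — one may take $T=\{\text{Reflexivity},\text{Unit-reduction},M,B,C\}$ for MC, add $H$ for HBC, add $H$ and $K$ for HBCK, while MBC coincides with MC — together with the quasi-equation of Implicative anti-symmetry; moreover $M\in T$ and Implicative monotonicity (equivalently $B$) holds in every $T$-algebra in all four cases.

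The second step is to check that $t(x,y)=(x\to y)\to y$ is a weak relative closure term for each such $T$. Every $T$-algebra satisfies Reflexivity, Unit-reduction, the $M$-axiom, Implicative commutativity, and — since $B$, $C$ and Reflexivity are available — Implicative monotonicity, i.e. $\to$ is increasing in its second component. Hence Theorem~\ref{thm: MC-algebras have relative closure term}(2) applies and shows that $t$ is a relative closure term in every $T$-algebra, so by Theorem~\ref{Soundness and completeness} the theory $T$ proves equations (4)--(8) of Definition~\ref{def: closure term}. Since Weak closure stability (9) is precisely the instance $z:=x$ of Closure stability (8), and equations (5) and (7) are common to both notions, $T$ also proves (5),(7),(9); that is, $t$ is a weak relative closure term for $T$.

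The third step is to apply Theorem~\ref{thm: Anti-symmetry equationalized}. Writing $X=(t(x,y)\to x)\to x=v(x,y)$ and $Xy=t(v(x,y),y)$, the theorem says that for every $T$-algebra $M$ one has $M\vDash(x\le y\land y\le x)\Rightarrow x\approx y$ if and only if $M\vDash X\approx Xy$. Therefore the class of MC-algebras is exactly the class of models of $T\cup\{X\approx Xy\}$, and likewise for MBC, HBC and HBCK with the corresponding equational theory $T$. Each of these is (equivalent to) $T'$-$\mathbf{Alg}$ for the finite equational theory $T'=T\cup\{X\approx Xy\}$, hence a variety, which proves the corollary.

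The main obstacle is not conceptual — Theorems~\ref{thm: MC-algebras have relative closure term} and \ref{thm: Anti-symmetry equationalized} carry all the real weight — but bookkeeping: one must be careful that the quasi-equational fragments (Implicative monotonicity, and the inference $H\Rightarrow M$) are genuinely replaceable by equations in the presence of $C$ and Reflexivity, so that the hypotheses of Theorem~\ref{thm: MC-algebras have relative closure term}(2) (Implicative commutativity together with monotonicity in the second coordinate) hold in the equational theory $T$ and not merely in the quasi-variety. The one slightly delicate point is the passage from "relative closure term" to "weak relative closure term", which becomes immediate once one observes that Weak closure stability is a substitution instance of Closure stability.
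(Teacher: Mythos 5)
Your proposal is correct and follows essentially the same route as the paper: replace Implicative monotonicity by the $B$-axiom via Example \ref{Examples of theories}(d), invoke Theorem \ref{thm: MC-algebras have relative closure term}(2) to get the relative closure term $t(x,y)=(x\to y)\to y$, and then use Theorem \ref{thm: Anti-symmetry equationalized} to trade the anti-symmetry quasi-equation for the $t$-anti-symmetry equation. You are in fact slightly more explicit than the paper on two bookkeeping points --- that a relative closure term is a weak relative closure term because Weak closure stability is the $z\coloneqq x$ instance of Closure stability, and that the hypotheses of Theorem \ref{thm: MC-algebras have relative closure term}(2) already hold in the purely equational fragment --- both of which are correct.
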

    \begin{proof}
        Note that by Example \ref{Examples of theories}(d), MC-algebras and MCB-algebras coincide and thus the quasi-equation of Right-monotonicity of implication can be replaced by the B-axiom. Theorem \ref{thm: MC-algebras have relative closure term}(2), the term $t(x,y) = (x\to y)\to y$ is a relative closure term in MC-algebras that (strongly) determines the order $x\leq y\coloneqq x\to y\approx e$. Thus the quasi-equation for anti-symmetry can be replaced by $t$-anti-symmetry equation
        \begin{align*}
            (((x\to y)\to y)\to x)\to x\approx (((((x\to y)\to y)\to x)\to x)\to y)\to y.
        \end{align*}\qedhere         
    \end{proof}

\section{Protomodular Algebraic Theories}
This section investigates categorical-algebraic properties, defining pointed, Malcev, protomodular, and semi-abelian theories \cite{BorceuxBourn2004}. We present no-go theorems (Theorem \ref{No-go theorem}) establishing that balanced theories or those admitting either non-discrete monotone or inflationary algebras cannot be Malcev or protomodular. Contrasting with these limitations, we introduce Johnstone algebras over $(*, \to, e)$. These are specifically constructed using Johnstone's protomodular terms to yield a semi-abelian category. The defining axioms ensure $e$ is a right identity and that $x \le y \coloneqq x \to y \approx e$ is reflexive and anti-symmetric. We also consider extensions: adding the Residuation axiom yields a posetal monoid structure. Further adding the Compositionality axiom ensures right-monotonicity for $*$ and $\to$, and links them via a Galois connection ($x*(-) \dashv x \to (-)$), unifying structures like groups and hoops within this semi-abelian framework.

\begin{definition}[Pointed, Malcev and Protomodular]
Let $T$ be an algebraic $\sigma$-theory. We say $T$ is:
    \begin{itemize}
        \item \textbf{Pointed} if the signature $\sigma$ contains a constant symbol $c$ such that for every function symbol $f$ of $\sigma$, $T$ proves the equation $f(c,\ldots, c)\approx c\ (f \approx c,\text{ if $f$ is a constant})$.
        \item \textbf{Malcev} if there exists a ternary term $p(x,y,z)$ such that $T$ proves the equations $p(x,y,y) \approx x$ and $p(x,x,y) \approx y$.
        \item \textbf{Protomodular} if there exist $n\in\N$, an $(n+1)$-ary term $\theta(x_0, x_1, \ldots, x_n)$, and $n$ binary terms $\theta_1(x,y), \ldots, \theta_n(x,y)$ such that $T$ proves the equations:
        \begin{itemize}
            \item $\theta_i(x,x) \approx t_i$ for all $i=1, \ldots, n$, where the terms $t_1,\ldots, t_n$ are constant terms.
            \item $\theta(y, \theta_1(x,y), \ldots, \theta_n(x,y)) \approx x$.
        \end{itemize}
        \item \textbf{Semi-abelian} if it is both pointed and protomodular.
    \end{itemize}
\end{definition}

A Malcev term $p(x,y,z)$ can be defined as $\theta(x,\theta_1(y,z),\ldots, \theta_n(y,z))$ using the protomodular terms $\theta,\theta_i,i\leq n$. Notice that 
 $$
 p(x,y,y) = \theta(x,\theta_1(y,y),\ldots,\theta_n(y,y)) \approx \theta(x,\theta_1(x,x),\ldots, \theta_n(x,x)) \approx x\text{ and }
 $$
 $$
 p(x,x,y) = \theta(x,\theta_1(x,y),\ldots, \theta_n(x,y)) \approx y.
 $$

The Malcev and protomodular properties are preserved under theory extensions. That is, if a theory $T$ has either these terms, any theory $T'\vdash \phi$ for $\phi\in T$, also inherits that property; even the signatures can be extended. This makes it valuable to identify minimal theories possessing these characteristics. Conversely, by contraposition, if a theory $T'$ lacks one of these properties, then any sub-theory $T\subset D_{T'}$ must also lack it, where $D_{T'}$ is the set of equations deduced form $T'$. This principle underlies the utility of the no-go theorems presented next.

\begin{definition}
    Let $T$ be a $\sigma$-theory. Let $M$ be a $T$-algebra. We say that $M$ is
    \begin{itemize}
        \item a \textbf{monotone algebra}, if $M$ admits a partial order $\leq$ such that each structure map is increasing in each component with respect to $\leq$·
        \item a \textbf{partially inflationary algebra}, if $M$ admits an order $\leq$ so that no constant is the minimum element of $M$ and that each structure map $\phi\colon M^n\to M$, $n>0$, is inflationary in some component: There is $i\leq n$ so that $x_i\leq \phi(x_1,\ldots, x_n)$ for each $x_1,\ldots, x_n\in M$.
        \item An \textbf{inflationary algebra}, if $M$ admits a partial inflationary structure $\leq$ where $M(c)$ is the maximum element of $M$ for each constant symbol $c$ of $\sigma$.
    \end{itemize}
\end{definition}

\begin{theorem}[No-Go Theorems for Malcev and Protomodular Theories]\label{No-go theorem}
Let $T$ be an algebraic $\sigma$-theory. Then the following assertions hold:

\begin{enumerate}
    \item \textbf{Balanced Theories:} If $T$ is a balanced theory, then $T$ cannot be Malcev.

    \item \textbf{Monotone Algebras:} Let $M$ be a monotone $T$-algebra and assume that $T$ is Malcev. Then $M\vDash x\approx y$.

    \item \textbf{Inflationary Algebras:} Suppose $T$ admits a partially inflationary $T$-algebra $M$. If $T$ is protomodular, then $M\vDash x\approx y$. Furthermore, if, $M$ is an inflationary algebra and $T$ is Malcev, then $M\vDash x\approx y$.
\end{enumerate}
\end{theorem}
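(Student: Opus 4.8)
The plan is to treat the three items separately; in each the same mechanism is at work — the Malcev or protomodular equations force a \emph{derived} term to behave like a projection, and this is ruled out either by the balance discipline of the equational calculus or by the chosen partial order. For item (1), I would first record the routine lemma that a balanced theory proves only balanced equations: this is an induction over the five deduction rules, the only non-obvious clause being Substitution, for which $\mathrm{Var}\bigl(t_1(s_1,\dots,s_n)\bigr)=\bigcup_{x_i\in\mathrm{Var}(t_1)}\mathrm{Var}(s_i)$, so balance of $t_1\approx t_2$ and of each $s_i\approx s_i'$ propagates. Granting this, suppose $T$ is balanced and Malcev with a ternary term $p(x,y,z)$. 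The provable equation $p(x,y,y)\approx x$ is then balanced, which forces $y\notin\mathrm{Var}\bigl(p(x,y,y)\bigr)$ and $x\in\mathrm{Var}\bigl(p(x,y,y)\bigr)$; the former says $p$ uses neither its second nor its third variable, so $p(x,y,z)$ is a term $p_0(x)$ in $x$ alone. But then the second Malcev equation reads $T\vdash p_0(x)\approx y$, an unbalanced equation in $D_T$ — a contradiction. (If $p$ is ground the contradiction is already visible in the first equation.)

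For item (2), the key point is that in a monotone algebra \emph{every} structure map — and hence, since componentwise-increasing maps are closed under composition, the interpretation of \emph{every} term — is increasing in each argument; in particular the Malcev term gives a map $M(p)\colon M^{3}\to M$ monotone in each of its three slots. If $a\le b$ in $M$, then $(a,a,b)\le(a,b,b)$ coordinatewise, so $b=M(p)(a,a,b)\le M(p)(a,b,b)=a$, whence $a=b$ by antisymmetry: comparable elements coincide, i.e.\ $\le$ is the discrete order. Reading, as in the introduction, a monotone algebra as carrying a non-discrete order, this is a contradiction, which is the content of the no-go statement. This also explains why groups are not ruled out: viewed in the signature $(*,\to,e)$, the division operation $\to$ is order-reversing in its first argument, so no order makes all of $*,\to,e$ monotone once the group is non-trivial.

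Item (3) I would organise around the \emph{inflationary descent} of a term $\tau$ in a partially inflationary algebra $M$: starting at the root, repeatedly pass to the subterm occupying a slot in which the current structure map is inflationary; since the syntax tree is finite this terminates at a leaf $\ell$, necessarily a variable or a constant symbol, and chaining the inflationary inequalities gives $M(\tau)(\vec a)\ge M(\ell)(\vec a)$ for every assignment $\vec a$. Apply this to $\Theta(x,y)\coloneqq\theta\bigl(y,\theta_1(x,y),\dots,\theta_n(x,y)\bigr)$, which the protomodularity equations force to satisfy $M(\Theta)(a,b)=a$ identically. If the descent of $\Theta$ ends at a constant $c$, then $a\ge M(c)$ for all $a$, so $M(c)$ is the least element of $M$, contradicting ``no constant is the minimum''; if it ends at the variable $y$, then $a\ge b$ for all $a,b$, so $M\vDash x\approx y$; and if it ends at the variable $x$, the descent must have passed into some slot $i\ge1$ and then descended inside $\theta_i$, so $M(\Theta)(a,b)\ge M(\theta_i)(a,b)\ge a$, forcing $M(\theta_i)(a,b)=a$; comparing $M(\theta_i)(a,a)=a$ with $\theta_i(x,x)\approx t_i$ (a constant term) gives $a=M(t_i)$ for all $a$, again $M\vDash x\approx y$. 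The degenerate shapes of $\theta$ or of a $\theta_i$ (a bare variable, or a ground term) are dispatched directly and all collapse $M$. The Malcev half is the same descent applied to $p(x,y,z)$ itself: using $p(x,y,y)\approx x$ and $p(x,x,y)\approx y$, each of ``the descent ends at $x$, at $y$, at $z$, or at a constant'' yields either $a\le b$ for all $a,b\in M$ (so $M\vDash x\approx y$) or a constant being the least element; note this already uses only the partially inflationary hypothesis.

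I expect the protomodular case of item (3) to be the main obstacle, on two counts: one must set up the inflationary descent so that it is seen to cross correctly from the outer term $\theta$ into one of the inner terms $\theta_i$ (and identify which slot $i$), and one must be careful that the leaf ultimately reached is a genuine variable or constant symbol — not an arbitrary ground subterm — so that the hypothesis ``no constant is the minimum'' actually bites. Once the descent lemma and the closure of componentwise monotonicity under composition are in hand, items (1), (2) and the degenerate-shape bookkeeping in (3) are all short.
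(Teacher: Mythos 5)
Your proof is correct and follows essentially the same route as the paper's: the closure of balanced equations under the deduction rules for (1), monotonicity of composite term functions for (2), and a structural-induction/descent argument through the syntax tree for (3), with the same case analysis on whether the reached leaf is $x$, $y$, or a constant. Your one genuine refinement is the observation that the Malcev half of (3) already goes through under the merely partially inflationary hypothesis (the leaf-is-a-constant case forces that constant to be the minimum, which is excluded), which slightly strengthens the paper's statement, where constants are assumed to be interpreted as the maximum for that half.
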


\begin{proof}
\hfill
\begin{enumerate}
    \item Assume $T$ is balanced. The class of balanced equations is closed under the deduction rules (reflexivity, symmetry, transitivity, substitution). Therefore, any equation provable from $T$ must be balanced. If $T$ were Malcev, there would exist a term $p(x,y,z)$ such that $T \vdash p(x,y,y) \approx x$ and $T \vdash p(x,x,y) \approx y$. Since these derived equations must be balanced, the variable $y$ cannot be expressed in the term $p(x,y,z)$.  This yields $T \vdash x \approx y$ and we have reached a contradiction, since $x\approx y$ is not a balanced equation. 
    
    \item Assume $T$ admits such an algebra $M$. By structural induction on terms, any $\sigma$-term $t(x_1, \ldots, x_n)$ induces a function $M(t)\colon M^n \to M$ that is increasing in all components. If $T$ has a Malcev term $p(x,y,z)$, then $M(p)\colon M^3\to M$ would be increasing. Since $\leq$ is non-discrete, let $a, b \in M$ with $a < b$. Then $a \leq b$ implies:
    \[ b = M(p)(a, a, b) \leq M(p)(a, b, b) = a \]
    By the anti-symmetry of $\leq$, this forces $a = b$, contradicting $a < b$. Thus, $T$ cannot be Malcev.

    \item Let $M$ be a partially inflationary $T$-algebra.
    \textit{(Protomodularity Part):} Let $V$ be the set of variables. For a term $t$, let $M(t): M^V \to M$ be the induced function. First, by structural induction on terms we have that $M(t)\leq M(x)$ implies that $M(t) = M(x)$ for every $\sigma$-term $t$ and variable $x\in V$. Second, by structural induction, we attain that $M(t(t_1,\ldots, t_n))\leq M(x)$ implies that $M(t_i) = M(x)$ for some $i\leq n$ and $\sigma$-terms $t(x_1,\ldots, x_n),t_1,\ldots, t_n$. 
    
    Now, assume $T$ is protomodular with terms $\theta, \theta_1, \ldots, \theta_n$. Since
    $$
    \theta(y, \theta_1(x,y), \ldots, \theta_n(x,y)) \approx x
    $$
    holds in $M$, we have that either $M(y) = M(x)$ or $M(\theta_i(x,y)) = M(x)$ for some $i\leq n$. Both cases show that $M$ satisfies $x\approx y$.

    (Malcev part): Assume constant symbols of $\sigma$ are interpreted as the maximum element of $M$. An easy induction shows that for any term $t$, there is a variable $x$ so that $M(x)\leq M(t)$. If $T$ has a Malcev term $p(x,y,z)$, then $M(x)\leq M(p(x_1,x_2,x_3))$. A small case analysis then shows that $M\vDash x\approx y$.\qedhere

\end{enumerate}
\end{proof}

\begin{example}[Theories Precluded from being Malcev]\label{ex:nogo_examples}
Theorem \ref{No-go theorem} provides criteria that prevent an algebraic theory from being Malcev (unless trivial). The following table categorizes several classes of algebras whose algebraic theories cannot contain Malcev terms by the criterion from Theorem \ref{No-go theorem}:

\medskip

\noindent 
\begin{tabular}{|p{0.3\textwidth}|p{0.3\textwidth}|p{0.3\textwidth}|}
\hline
\textbf{Balanced Algebras} \par \textit{(via Thm \ref{No-go theorem}(1))} &
\textbf{Monotone Algebras} \par \textit{(via Thm \ref{No-go theorem}(2))} &
\textbf{Inflationary Algebras} \par \textit{(via Thm \ref{No-go theorem}(3))} \\
\hline \hline 

\begin{itemize} \itemsep1pt 
    \item Monoids
    \item Pointed semilattices
    \item Heyting semilattices (Ex. \ref{Examples of theories}(c)) without assuming Reflexivity: $x\to x\approx e$
    \end{itemize}
&
\begin{itemize} \itemsep0pt
    \item Lattices $(\land, \lor)$
    \item Bounded Lattices $(\land,\lor, \top,\bot)$
    \item Lattice-ordered monoids $(*,\land,\lor,\top,\bot)$
\end{itemize}
&
\begin{itemize} \itemsep0pt
    \item HBCK-algebras
    \item MCK-algebras
    \item Any non-collapsing $(\lor ,\to,\top)$-theory admitting an order and proving
    \begin{itemize}
        \item $x\leq x\lor y$
        \item $y\leq x\to y$
        \item $x\leq \top$
    \end{itemize}
\end{itemize}
\\
\hline
\end{tabular}
\end{example}
\begin{definition}[Johnstone Algebras]\label{def:johnstone_algebra}
Consider the signature $\sigma = (*, \to, e)$ with arities $(2, 2, 0)$. The theory $J$ of \emph{Johnstone algebras} is defined by the following equations:
\begin{enumerate}
    \item $x\to x \approx e$ \hfill (Reflexivity)
    \item $e\to x \approx x$ \hfill (Unit-reduction)
    \item $((x\to y)\to y)* (((x\to y)\to y)\to x) \approx x$ \hfill (Monotone division)
\end{enumerate}
These axioms are chosen specifically so that the \emph{Johnstone protomodular terms}, given by
\begin{align*}
    \theta(x_0, x_1, x_2) &\coloneqq (x_1 \to x_0) * x_2 \\
    \theta_1(x,y) &\coloneqq x\to y \\
    \theta_2(x,y) &\coloneqq ((x\to y)\to y)\to x,
\end{align*}
satisfy the protomodularity equations, ensuring the resulting category is semi-abelian (see Theorem \ref{thm:J_properties}).
\end{definition}

As noted previously (Theorem \ref{thm: characterizing reflectivity and transitivity}(1)), the Monotone division axiom can be interpreted as an equational version of the quasi-equation $x\leq y \implies y*(y\to x) \approx x$, particularly when Modus ponens ($x \le (x\to y)\to y$) holds. Furthermore, in theories that also satisfy Residuation, Monotone division induces the Monotonic exchange (M-axiom (21)): $(t(x,y)\to x)\to (t(x,y)\to z) \approx x\to z$, where $t(x,y)=(x\to y)\to y$.

The following theorem details the fundamental properties derived from the Johnstone algebra axioms.

\begin{theorem}[Properties of Johnstone Algebras]\label{thm:J_properties}
Let $J$ be the theory of Johnstone algebras. Define the relation $x\leq y$ by the equation $x\to y\approx e$. The following properties hold:
\begin{enumerate}
    \item \textbf{(Semi-abelian Structure)} The theory $J$ is semi-abelian. Specifically:
        \begin{itemize}
            \item $J$ is pointed with respect to the constant $e$.
            \item $J$ proves that $e$ is a right identity for $*$, i.e., $J \vdash x*e \approx x$.
            \item The Johnstone protomodular terms $\theta, \theta_1, \theta_2$ satisfy the protomodularity equations relative to $e$:
                \begin{align*} J &\vdash \theta_1(x,x) \approx e, \\ J &\vdash \theta_2(x,x) \approx e, \\ J &\vdash \theta(y, \theta_1(x,y), \theta_2(x,y)) \approx x. \end{align*}
        \end{itemize}
    \item \textbf{(Order Properties)} The relation $\leq$ is reflexive and anti-symmetric in any $J$-algebra. Furthermore, $\leq$ is semi-determined by the term $t(x,y) = (x\to y)\to y$, meaning $J \vdash (x \leq y \implies t(x,y) \approx y)$.
\end{enumerate}
\end{theorem}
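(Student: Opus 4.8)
The entire statement follows from a single well-chosen substitution into the Monotone division axiom, and I would organize the argument around that observation. \textbf{First}, I would prove that $J \vdash x * e \approx x$. Substituting $y \mapsto x$ in Monotone division replaces the subterm $(x\to y)\to y$ by $(x\to x)\to x$, which equals $e \to x$ by Reflexivity and then equals $x$ by Unit-reduction; the axiom therefore collapses to $x * (x\to x) \approx x$, i.e.\ $x * e \approx x$ after one further use of Reflexivity. This right-identity law is the linchpin of everything that follows.

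\textbf{Pointedness} is then immediate: for the constant $e$ there is nothing to check, for $\to$ one has $e \to e \approx e$ by Unit-reduction (or Reflexivity), and for $*$ one has $e * e \approx e$ by the right identity just obtained. For \textbf{protomodularity} I would verify the three equations in turn: $\theta_1(x,x) = x\to x \approx e$ is exactly Reflexivity; for $\theta_2(x,x) = ((x\to x)\to x)\to x$, the inner $(x\to x)\to x$ again simplifies to $e\to x \approx x$ by Reflexivity and Unit-reduction, so $\theta_2(x,x) \approx x\to x \approx e$; and $\theta(y,\theta_1(x,y),\theta_2(x,y))$ unfolds, by the definition of the protomodular terms, to precisely the left-hand side $((x\to y)\to y)*(((x\to y)\to y)\to x)$ of Monotone division, hence equals $x$. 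Since the required constant terms are both $e$, this exhibits $J$ as protomodular; being also pointed, $J$ is semi-abelian.

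For the \textbf{order properties}, reflexivity of $\leq$ is just Reflexivity. For semi-determination I would pass to the signature extended by a fresh constant as in Definition~\ref{def:DeducingQuasi}: assuming $x\to y \approx e$, Unit-reduction gives $t(x,y) = (x\to y)\to y \approx e\to y \approx y$. For anti-symmetry, adjoin fresh constants $a,b$ together with $a\to b \approx e$ and $b\to a \approx e$; instantiating Monotone division at $a,b$ gives $((a\to b)\to b)*(((a\to b)\to b)\to a) \approx a$, and rewriting with $a\to b \approx e$ and Unit-reduction twice yields $b*(b\to a) \approx a$; then $b\to a \approx e$ reduces this to $b*e \approx a$, and the right identity gives $b \approx a$, as required.

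\textbf{The main point} is that the axioms were reverse-engineered to make all of this essentially mechanical: the only step that is not pure bookkeeping with Reflexivity and Unit-reduction is the substitution $y \mapsto x$ in Monotone division that yields the right identity, from which pointedness, the $\theta$-equations, and anti-symmetry all unwind. The one thing to be careful about is that the two quasi-equational claims (semi-determination and anti-symmetry) should be handled through the fresh-constant formalism of Definition~\ref{def:DeducingQuasi} rather than argued informally.
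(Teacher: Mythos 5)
Your proposal is correct and follows essentially the same route as the paper: derive the right identity $x*e\approx x$ by substituting $y\mapsto x$ into Monotone division and simplifying with Reflexivity and Unit-reduction, then read off pointedness, the three protomodularity equations (the third being Monotone division verbatim), and finally anti-symmetry via the chain $a\approx b*(b\to a)\approx b*e\approx b$. No gaps; the explicit appeal to the fresh-constant formalism for the quasi-equational claims is a welcome bit of extra care that the paper leaves implicit.
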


\begin{proof}
\hfill
\begin{enumerate}
    \item \textbf{(Semi-abelian Structure)}
        \begin{itemize}
            \item Pointedness: We need $e\to e \approx e$ and $e*e \approx e$. The Reflexivity axiom gives $e\to e \approx e$. For $e*e \approx e$, we first derive the right identity property $x*e \approx x$:
            \begin{align*}
                x &\approx ((x\to x)\to x)*(((x\to x)\to x)\to x) \tag{Monotone division} \\
                  &\approx (e\to x)*((e\to x)\to x) \tag{Reflexivity} \\
                  &\approx x * (x\to x) \tag{Unit-reduction} \\
                  &\approx x * e \tag{Reflexivity}
            \end{align*}
            Substituting $e$ on $x$, we have $e \approx e*e$. Thus $J$ is pointed.
            \item Protomodularity Equations:
                \begin{itemize}
                    \item $\theta_1(x,x) = x\to x \approx e$ by Reflexivity.
                    \item $\theta_2(x,x) = ((x\to x)\to x)\to x = (e\to x)\to x = x\to x \approx e$ by Reflexivity and Unit-reduction.
                    \item $\theta(y, \theta_1(x,y), \theta_2(x,y)) = (\theta_1(x,y) \to y) * \theta_2(x,y)$ \\
                       $\hphantom{\theta(y, \theta_1(x,y), \theta_2(x,y))} = ((x\to y) \to y) * (((x\to y)\to y)\to x)$ \\
                       $\hphantom{\theta(y, \theta_1(x,y), \theta_2(x,y))} \approx x$ by Monotone division.
                \end{itemize}
        \end{itemize}
        Since $J$ is pointed and the Johnstone terms satisfy the protomodularity equations, $J$ is a semi-abelian theory.

    \item \textbf{(Order Properties)}
        Reflexivity ($x \leq x$) follows directly from the Reflexivity axiom.
        For anti-symmetry, assume $a \leq b$ and $b \leq a$ hold (i.e., $a\to b \approx e$ and $b\to a \approx e$). Now
        \begin{align*}
            a &\approx ((a\to b)\to b)*(((a\to b)\to b)\to a) \tag{Monotone division} \\
              &\approx b * (b \to a) \tag{$a\leq b$ and Unit-reduction} \\
              &\approx b * e \tag{$b\leq a$} \\
              &\approx b \tag{Right identity}
        \end{align*}
        For semi-determination by $t(x,y)=(x\to y)\to y$: Assume $x \leq y$, so $x \to y \approx e$. Then $t(x,y) = (x\to y)\to y \approx e \to y \approx y$ by Unit-reduction.\qedhere
\end{enumerate}
\end{proof}

As illustrated in Example \ref{Examples of theories}(e), significant classes of algebras such as hoops and left loops satisfy the axioms of Johnstone algebras and therefore form semi-abelian categories.
\begin{theorem}[Properties of Enhanced Johnstone Algebras]\label{Properties of Johnstone algebras}
Let $(X,*,\to,e)$ be a Johnstone algebra, with $\leq$ defined by $x\leq y \iff x\to y \approx e$. Consider the effects of adding standard axioms from related algebraic structures:
\begin{enumerate}
    \item \textbf{(Residuation)} If $X$ satisfies the Residuation axiom, then $X$ defines a poset and a monoid structure. A left loop satisfying Residuation is precisely a group.
    \item \textbf{(Compositionality)} If $X$ satisfies the Compositionality axiom, then $(X, \leq)$ is a poset, and the implication $\to$ is increasing in its second argument. Consequently, a left loop satisfying Compositionality is precisely a group.
    \item \textbf{(Residuation and Compositionality)} If $X$ satisfies both Residuation and Compositionality, then for each $x \in X$, the operations $x*(-)$ and $x\to (-)$ form a Galois connection, i.e., $x*y \leq z \iff y \leq x\to z$. This implies that multiplication $*$ is increasing in its right argument.
    \item \textbf{(Commutativity, Residuation, and Compositionality)} If $X$ is commutative and satisfies Residuation and Compositionality, then $X$ also satisfies Implicative Commutativity ($x\to (y\to z)\approx y\to (x\to z)$) and Modus ponens ($x \to ((x\to y)\to y) \approx e$). Furthermore, multiplication $*$ is increasing in both arguments, and implication $\to$ is decreasing in the first argument and increasing in the second. Hoops and commutative groups are primary examples of such algebras.
\end{enumerate}
\end{theorem}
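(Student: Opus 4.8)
The governing idea is that the Residuation axiom rewrites every nested implication $x\to(y\to z)$ as a single implication $(y*x)\to z$ out of a $*$-product, so the order-theoretic content of $*$ is reduced to facts about $\to$; once this is organised, each clause reduces to a short calculation using the anti-symmetry and semi-determination of $\le$ already supplied by Theorem~\ref{thm:J_properties}, and the pre-order statement of Theorem~\ref{thm: MC-algebras have relative closure term}(1). For \textbf{(1)}, I would first note that Residuation together with Monotone division yields the M-axiom (indeed $(t(x,y)\to x)\to(t(x,y)\to z)\approx(t(x,y)*(t(x,y)\to x))\to z\approx x\to z$), so by Theorem~\ref{thm: MC-algebras have relative closure term}(1) the relation $\le$ is a pre-order, hence a partial order when combined with anti-symmetry from Theorem~\ref{thm:J_properties}(2). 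For the monoid: $x*e\approx x$ is in Theorem~\ref{thm:J_properties}(1); instantiating the left factor in Residuation to $e$ and using Unit-reduction gives $x\to z\approx(e*x)\to z$ for all $z$, and specialising $z$ to $x$ and to $e*x$ forces $e*x\approx x$ by anti-symmetry; applying Residuation twice to each side gives $((x*y)*z)\to w\approx z\to(y\to(x\to w))\approx(x*(y*z))\to w$ for all $w$, whence associativity by anti-symmetry. A left loop satisfies all three $J$-axioms (Lower division instantiated at $t(x,y)$ and $x$ is Monotone division; Reflexivity and Unit-reduction come from Upper division with the identities), so the above makes it a monoid whose left translations $x*(-)$ are bijective with inverse $x\to(-)$, which is a group; conversely a group with $x\to y\coloneqq x^{-1}*y$ satisfies the left-loop axioms and Residuation.

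\textbf{(2) and (3).} From the B-axiom $(y\to z)\to((x\to y)\to(x\to z))\approx e$ and Unit-reduction: $b\le c$ gives $(a\to b)\le(a\to c)$, which is right-monotonicity of $\to$; and the same instance read with $(x,y,z)\coloneqq(a,b,c)$ shows $a\le b$ and $b\le c$ imply $a\le c$, so $\le$ is transitive and, with Theorem~\ref{thm:J_properties}, a partial order. For the left-loop claim, in a left loop $a\to b\approx e$ implies $a\approx b$ (apply $a*(-)$), so the B-axiom collapses to the identity $x\to y\approx(z\to x)\to(z\to y)$; substituting successively $y\coloneqq z*w$, then $x\coloneqq z*u$, then $w\coloneqq u*v$ and simplifying with $a\to(a*b)\approx b$ and $a*(a\to b)\approx b$ produces $(z*u)*v\approx z*(u*v)$, so the left loop is an associative loop, i.e.\ a group. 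For \textbf{(3)}: Residuation alone makes $(x*y)\to z\approx y\to(x\to z)$ an identity, so $x*y\le z\iff y\le x\to z$; since $(X,\le)$ is a poset by~(1), this is precisely a Galois connection $x*(-)\dashv x\to(-)$, and left adjoints are monotone, so $*$ is increasing in its right argument (Compositionality additionally gives right-monotonicity of $\to$ as above).

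\textbf{(4).} With Commutativity, Residuation gives $x\to(y\to z)\approx(y*x)\to z\approx(x*y)\to z\approx y\to(x\to z)$, which is Implicative Commutativity. The Galois connection of~(3), applied with middle and last entries $x\to y$ and $y$ (using reflexivity of $\le$), gives $x*(x\to y)\le y$, hence $x\to((x\to y)\to y)\approx((x\to y)*x)\to y\approx(x*(x\to y))\to y\approx e$, which is Modus ponens. Right-monotonicity of $*$ is~(3), left-monotonicity follows by Commutativity; right-monotonicity of $\to$ is~(2), and applying Implicative Commutativity to the B-axiom gives $(x\to y)\to((y\to z)\to(x\to z))\approx e$, so $x\le y$ implies $(y\to z)\le(x\to z)$, i.e.\ $\to$ is decreasing in its first argument. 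Finally hoops and commutative groups are Johnstone algebras (Example~\ref{Examples of theories}) that are commutative and satisfy Residuation and Compositionality, hence instances.

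\textbf{Main obstacle.} Everything but one step is routine once the Residuation rewriting is set up; the exception is deriving associativity purely from the B-axiom inside a left loop (the substitution chain in~(2)), where the right substitutions into the collapsed B-identity have to be found. A secondary point requiring care is the orientation of Residuation: the \emph{right} factor of a $*$-product becomes the \emph{outermost} antecedent, and several steps (left identity, associativity, Implicative Commutativity) apply this twice.
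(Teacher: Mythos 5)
Your proposal is correct, and for most of the theorem it follows essentially the same route as the paper: Residuation plus Monotone division gives the M-axiom, hence transitivity via Theorem~\ref{thm: MC-algebras have relative closure term}(1); the monoid laws come from rewriting $(-)\to w$ with Residuation and invoking anti-symmetry (your ``specialise $z$ to each side'' is the same Yoneda-style argument the paper displays); parts (3) and (4) match the paper's use of the Galois connection, Implicative Commutativity, and the B/C interchange for first-argument antitonicity. The one place where you take a genuinely different path is the claim in (2) that a compositional left loop is a group. The paper uses discreteness of $\le$ to collapse the B-axiom to the identity $x\to y \approx (z\to x)\to(z\to y)$ and then \emph{derives Residuation} from it via Upper division ($(y*x)\to z \approx (y\to(y*x))\to(y\to z) \approx x\to(y\to z)$), reducing to part (1). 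You instead stay entirely on the $\to$-side: from the collapsed identity you derive $(z*u)\to(z*w)\approx u\to w$ and then, setting $w:=u*v$ and applying Lower division, obtain associativity directly. Your substitution chain checks out (each step is an instance of Upper division, and the final application of $(z*u)*(-)$ is Lower division), and it is arguably more self-contained since it does not route through Residuation; the paper's version is shorter because it reuses part (1). Either way the statement is fully established.
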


\begin{proof}
\hfill
\begin{enumerate}
    \item Assume $X$ satisfies Residuation. Residuation implies Monotonic exchange (M-axiom (21)). The proof of Theorem \ref{thm: MC-algebras have relative closure term}(1) demonstrated that Monotonic Exchange (in the presence of Reflexivity and the established anti-symmetry of $\le$) implies transitivity of $\leq$. Thus $(X,\le)$ is a poset.
    
    The equivalence $x*w \leq y \iff w \leq x\to y$ is implied by Residuation.
    
    Consider the following Yoneda-style arguments showing Associativity and Left identity: For all $x,y,z,w \in X$:
    \begin{align*}
    &(x*y)*z \leq w                       &e*x&\leq w\\
    &\iff z \leq (x*y)\to w          &\iff x&\leq e\to w\\
    &\iff z \leq y\to(x\to w)       &\iff x&\leq w\\
    &\iff y*z \leq x\to w\\
    &\iff x*(y*z) \leq w 
    \end{align*}
    Since this holds for all $w$, we have by anti-symmetry $(x*y)*z \approx x*(y*z)$ and $e*x \approx x$. Since $e$ is already a right identity (Theorem \ref{thm:J_properties}), $(X,*,e)$ is a monoid.
    
    A left loop that is associative is a group. Conversely, any group $(G, *, e)$ with $x \to y \coloneqq x^{-1}*y$ satisfies the Johnstone axioms and Residuation: 
    $$
    x\to (y\to z) = x^{-1}*(y^{-1}*z) = (x^{-1}*y^{-1})*z = (y*x)^{-1}*z = (y*x)\to z.
    $$

    \item Assume $X$ satisfies Compositionality. To show transitivity of $\leq$, let $x\leq y$ and $y\leq z$. Now
    \begin{align*} x\to z &\approx e\to(e \to (x\to z)) \tag{Unit-reduction} \\
    &\approx (y\to z)\to ((x\to y) \to (x\to z)) \tag{$y\le z, x\leq y$}\\
    &\approx e\tag{Compositionality}
    \end{align*}
    
    Thus $x \leq z$. To show $\to$ is increasing in the second argument, assume $y \leq z$.
    \begin{align*} (x\to y)\to (x\to z)
    &\approx e \to ((x\to y)\to (x\to z)) \tag{Unit-reduction}\\
    &\approx (y\to z)\to ((x\to y)\to (x\to z)) \tag{$y\leq z$}\\
    &\approx e \tag{Compositionality} \end{align*}    
    Let $G$ be a group with $x,y,z\in G$. Since $G$ is a group, we have $x\to y = x^{-1}*y$ and
    $$
    (x\to y)\to ((z\to x)\to (z\to y)) = y^{-1}xx^{-1}zz^{-1}y = e
    $$
    and thus $G$ is compositional. Assume that $(X,*,\to,e)$ is a compositional left loop. Since the posetal structure is discrete, we have an equation $x\to y = (z\to x)\to (z\to y)$ for any $x,y,z\in X$. It suffices to show Residuality by part $1$. For $x,y,z\in X$ we have
    $$
    yx\to z = (y\to yx)\to (y\to z) = x\to (y\to z).
    $$

    \item Assume $X$ is residual and compositional. By part (2), $\to$ is increasing in the second argument. The definition of Residuation $x \to (y\to z) \approx (y*x) \to z$ which implies that $a\leq b\to c \iff b*a\leq c$ for each $a,b\in X$, and so we have the Galois connection $a*(-) \dashv a \to (-)$ for each $a\in X$. Thus $a*(-)$ is increasing for each $a\in X$.

    \item Assume $X$ is commutative, residual and compositional. For $x,y,z\in X$:
    \[
    x\to (y\to z) \approx (y*x)\to z \approx (x*y)\to z \approx y\to (x\to z)
    \]
    using Residuation, Commutativity, Residuation. This proves Implicative Commutativity. Implicative commutativity (with Reflexivity) implies Modus ponens.
    
    Monotonicity of $*$ in the right argument follows from part (3). By commutativity, it is also monotone in the left argument. The Galois connection $x*(-) \dashv x\to (-)$ for every $x\in X$ implies $\to$ is decreasing in the first argument. The decreasing in the first component can be deduced from the implicative commutativity as it was done in Theorem \ref{thm: Anti-symmetry equationalized}.\qedhere
\end{enumerate}
\end{proof}
The RBJ-algebras, Johnstone algebras satisfying both Residuation and Compositionality, form a semi-abelian theory whose algebras $M$ possess an equationally defined poset structure which almost forms a monoidal closed structure ($x*(-)\dashv x\to(-)$) for $x\in M$. The structure fails to be fully monoidal closed only because multiplication might not be increasing in the left argument. Adding commutativity remedies this, yielding the class of RBCJ-algebras (Commutative, Compositional, Residual Johnstone algebras), which indeed induce symmetric monoidal closed posets within the context of a semi-abelian category.
\section*{Conclusion}

This paper introduced \emph{Johnstone algebras}, defined over $(*, \to, e)$ and axiomatized by Reflexivity, Unit-reduction, and Monotone division. This structure is designed to ensure the associated category is semi-abelian, following Johnstone's work on Heyting semilattices \cite{Johnstone2004}. Johnstone algebras provide a unified semi-abelian framework encompassing algebraic structures like left loops and groups alongside logical ones like hoops and Heyting semilattices.

Our primary technical contributions are twofold. First, we defined the notion of a weak relative closure term $t(x,y)$ in a theory $T$ and established in Theorem \ref{thm: Anti-symmetry equationalized} that the equation
$$
t(t(x,y),x)\approx t(t(t(x,y),x),y)
$$
is equivalent to the quasi-equation for anti-symmetry proving the quasi-variety of MBC-algebras form a variety. Second, we presented no-go theorems demonstrating that no algebraic theory $T$ admitting a partially ordered, non-discrete, inflationary/monotone algebra can have a Malcev term. Especially, no $(\to,e)$-theory can be both Malcev and admit a non-discrete partially ordered algebra $M$ satisfying $y\leq x\to y$ and $x\leq e$ for each $x,y\in M$, thereby contextualizing the specific structure of Johnstone algebras required for their semi-abelian property.

Further analysis of Johnstone algebras with Residuation and Compositionality revealed connections to residuated structures and monoidal closed categories. Within this richer setting, the Monotone division axiom, via its equivalent form Monotonic exchange, can be interpreted as a restricted version of the H-axiom, linking it to established logical principles. Preliminary observations also suggest connections between Monotone division, categorical evaluation, and duality concepts, warranting deeper study.

In essence, Johnstone algebras offer a framework integrating algebraic symmetry and logical implication within a categorically well-behaved setting. While the base theory secures semi-abelianness, extensions with Compositionality and Residuation provide promising directions for investigating novel algebraic and logical systems that bridge these domains.

\section*{Acknowledgements}
I would like to offer my sincere gratitude to my supervisor, Professor Tim Van der Linden, for his comments on refining this paper. I am also grateful for the Belgian National Fund for Scientific Research (FNRS) for the financial support through the ASP-grant.
\printbibliography

\end{document}